\documentclass[12pt]{article}

\usepackage[margin=1in]{geometry}
\usepackage{amsmath,amsthm,amssymb,amsfonts}
\usepackage{mathtools}
\usepackage{hyperref}
\usepackage{url}
\usepackage[numbers,sort&compress]{natbib}
\usepackage{enumitem}
\usepackage{silence}
\usepackage[expansion=false]{microtype}
\usepackage{tikz}
\usetikzlibrary{arrows.meta,positioning,calc,decorations.pathreplacing}
\usepackage{listings}
\theoremstyle{plain}
\newtheorem{theorem}{Theorem}[section]
\newtheorem{proposition}[theorem]{Proposition}
\newtheorem{lemma}[theorem]{Lemma}
\newtheorem{corollary}[theorem]{Corollary}

\theoremstyle{definition}
\newtheorem{definition}[theorem]{Definition}

\newtheorem{counterexample}[theorem]{Counterexample}

\theoremstyle{remark}
\newtheorem{remark}[theorem]{Remark}

\newcommand{\R}{\mathbb{R}}
\newcommand{\doi}[1]{\textsc{doi:}~\texttt{#1}}
\DeclareMathOperator{\Crit}{Crit}
\DeclareMathOperator{\intr}{int}
\newcommand{\p}{\partial}
\newcommand{\Cred}{C_{\mathrm{red}}}
\newcommand{\Ssec}{\Sigma_E^{\mathrm{sec}}}
\newcommand{\Areg}{A_E}
\newcommand{\DE}{D_E}
\newcommand{\Ered}{\mathcal{E}}          
\newcommand{\hb}{h_{b}}
\newcommand{\Fb}{F_{b}}

\title{Reconstruction through a coisotropic reduction:\\
	the missing momentum on a Poincar\'e section as a branched covering}

\author{%
	E.~Chan-L\'opez\thanks{E-mail: \texttt{eduardo.clopez13@gmail.com}.\;
		\textsc{orcid}:~0009-0003-7712-7907.}}

\date{%
	Divisi\'on Acad\'emica de Ciencias B\'asicas,
	Universidad Ju\'arez Aut\'onoma de Tabasco, 86690 Cunduac\'an,
	Tabasco, Mexico}

\begin{document}
	\maketitle
	
	\begin{abstract}
		Determining the momentum missing on a Poincar\'e section of a two-degree-of-freedom Hamiltonian is usually posed as the root-finding problem $H(0,q_2,p_1,p_2)=E$, which degenerates at the boundary of the energetically allowed region. We argue that the degeneration is the visible symptom of a geometric object and reorganize the problem around it. The section is a coisotropic hypersurface, the missing momentum is its characteristic direction, and recovering it is a \emph{reconstruction} through the associated coisotropic reduction. Its carrier is not the fibrewise least-energy function, which we show is informationally incomplete, but the correspondence $\Sigma_E^{\mathrm{sec}}=\Sigma_E\cap C$ together with the reducing projection $\pi_E$, a branched reconstruction over the admissible region whose discriminant is identified with the Hill boundary under a base-side regularity condition.
		
		We prove, isolating for each conclusion its minimal hypothesis: that $\pi_E$ is proper and a covering away from the discriminant (coercivity); that the fibres have exactly two points over the interior (unimodality); that along the discriminant $\pi_E$ is a fold (fibrewise nondegeneracy and transversality); and that the discriminant coincides with the topological boundary of the admissible region precisely under a regularity of the least-energy function \emph{on the base}, which no fibrewise hypothesis supplies. Strict convexity of $H$ in the momenta is shown to be a sufficient but non-fundamental bundle of these conditions. The boundary is a discriminant, not a Lagrangian caustic. Only afterwards does the squared fibrewise energy residual $F_b=(h_b-E)^2$ appear, as the analytic resolution of the fold; inside the admissible region it is a double well with two global minimizers, so branch selection is a separate rule. All symbolic checks and worked examples are made reproducible through an accompanying \textsc{Wolfram Language} script.
	\end{abstract}
	
	\noindent\textbf{Keywords:} Poincar\'e section, coisotropic reduction,
	reconstruction, branched covering, discriminant, fold singularity, Hill
	boundary.
	
	\medskip
	\noindent\textbf{2020 Mathematics Subject Classification:}
	37J39, 70H33, 53D20, 58K05, 65P10.
	
	\section{Introduction}\label{sec:intro}
	
	To reduce a mechanical system is to forget. When a symplectic manifold is
	quotiented along a coisotropic hypersurface, the coordinate tangent to the
	characteristic foliation is discarded by construction
	\cite{arnold,jose1998,abraham1978}. The inverse operation (recovering, from a point
	of the reduced space and the value of a constraint, a point of the original
	hypersurface) is what we call \emph{reconstruction}. This paper takes the
	position that reconstruction is a geometric object with a structure of its
	own, and that this structure, rather than any procedure for computing it, is
	the content of a problem that is usually posed numerically.
	
	Let $H\colon\R^{4}\to\R$ be a $C^{2}$ Hamiltonian of two degrees of freedom,
	fix an energy $E$, and take the section $\{q_{1}=0\}$. Selecting coordinates
	$(q_{2},p_{2})$, one seeks the missing momentum $p_{1}$ from
	\begin{equation}\label{eq:constraint}
		H(0,q_{2},p_{1},p_{2})=E .
	\end{equation}
	Where the leaf is tangent to $\Sigma_E$ the equation degenerates: $\p H/\p
	p_{1}$ vanishes and Newton-type root-finding loses precision \cite{henon}, and
	under the conditions of Section~\ref{sec:structure} this tangency locus is
	carried onto the boundary of the admissible region.
	
	The elementary phenomena attached to \eqref{eq:constraint} are not new. That
	projections of energy surfaces are two-to-one, that the two solutions coalesce
	where the section is tangent to the projected direction, that a boundary of
	accessible states appears, and that this boundary reorganizes at critical
	values of an effective potential, are all familiar from rigid-body dynamics:
	the enveloping surfaces and admissible-velocity sets of Gashenenko and
	Richter \cite{gashenenko2004}, the tangent sets of Dullin and Wittek
	\cite{dullin1995}, the role of the accessible region in the topology of energy
	surfaces \cite{bolsinov1996}, and the Poincar\'e section of Schmidt, Dullin and
	Richter \cite{schmidt2009}, in which the flow is not transverse to the section,
	the tangency set is not invariant, and the section is cut along it and
	projected two-to-one. In natural mechanical systems the missing momentum is
	recovered by a square root, whose branch structure is exactly this
	two-to-one projection. The customary reading treats the accompanying numerical
	degeneration as a defect of the solver, to be mitigated numerically.
	
	We ask a different, structural question. Although two-sheeted projections,
	tangency boundaries and accessible regions arise in specific Hamiltonian
	systems, what is the general geometric object that organizes the recovery of
	the missing momentum, and \emph{which hypotheses control, separately,
		existence, multiplicity, regularity and degeneration}? The degeneration is
	then not a defect but the shadow of a singular projection, and the governing
	question is not \emph{how} to solve \eqref{eq:constraint} but
	
	\begin{quote}
		\emph{what controls the existence, multiplicity, and regularity of the
			lift of $(q_{2},p_{2})$ to the energy level set, through the reduction
			that has forgotten $p_{1}$?}
	\end{quote}
	
	The section $\{q_{1}=0\}$ is a coisotropic hypersurface of $(\R^{4},\omega)$,
	its characteristic orthogonal is $TC^{\omega}=\R\,\p_{p_{1}}$, and the
	projection $\pi\colon C\to\Cred\cong\R^{2}$ that forgets $p_{1}$ is the
	associated coisotropic reduction. Read this way, the missing momentum is not a
	variable to be solved for numerically: it is the coordinate along a
	characteristic leaf that the reduction has discarded, and \eqref{eq:constraint}
	is the demand to recover it from the reduced point and the energy. Its natural
	carrier is not any scalar surrogate but the energy level set cut by the
	section, $\Ssec=\Sigma_E\cap C$, together with $\pi_E=\pi|_{\Ssec}$.
	
	Our contribution is to organize the recovery as a general theory of this
	correspondence and to isolate the hypotheses behind each of its features.
	Three mathematical threads run through the paper, and a fourth part realizes
	them computationally. \emph{(i) Geometric formulation:} the pair
	$(\Ssec,\pi_E)$ is the reconstruction object, and $p_{1}$ is the discarded
	characteristic coordinate (Section~\ref{sec:setting}). \emph{(ii) An
		information-loss result:} the fibrewise least-energy function
	$\Ered(b)=\inf_{p_{1}}H(0,q_{2},p_{1},p_{2})$ is the canonical scalar reduction
	of the fibre observable (it is the right adjoint of pullback for the
	pointwise order), yet it is not injective and does not determine $\Ssec$; it
	records the admissible silhouette but not the lifted fibre geometry
	(Section~\ref{sec:loss}). Canonical does not mean information-complete.
	\emph{(iii) Structural theorems under separated hypotheses:} coercivity (C)
	gives properness and a covering away from the discriminant; unimodality (U)
	gives the two/one/zero count; fibrewise nondegeneracy (N) with transversality
	gives a Whitney fold $(u,v)\mapsto(u,v^{2})$ along the discriminant, hence the
	square-root branch separation $p_{\pm}-p_{c}\sim\pm\sqrt{2(E-\Ered)/h_{b}''}$;
	and a \emph{base-side} regularity (R) is what identifies the discriminant with
	the Hill boundary. Fibrewise regularity does not control the topology of the
	image in the base: we exhibit a Hamiltonian satisfying (C), (U) and (N) whose
	discriminant nonetheless contains an interior branch point, so that (R) is a
	genuinely independent condition, not a technicality
	(Section~\ref{sec:structure}). The variational resolution of
	Section~\ref{sec:variational} (the squared residual $F_{b}=(h_{b}-E)^{2}$ as
	an analytic reformulation of the same fold, a double well with two global
	minimizers inside the admissible region that does not by itself select a
	sheet) and the accompanying \textsc{Wolfram Language} script together form the
	realization and reproducibility layer of these three threads, not a
	contribution of separate rank (Sections~\ref{sec:variational},
	\ref{sec:impl}).
	
	A positive-definite momentum Hessian, $D^{2}_{pp}H\succ0$, implies
	$\p^{2}_{p_{1}p_{1}}H>0$ and hence (U) and (N), but neither (C) nor (R); and
	ordinary strict convexity does not even imply (N). Momentum convexity is thus
	sufficient for the fibrewise conditions but not fundamental
	(Section~\ref{sec:convex}). The discriminant, finally, is a discriminant of
	$\pi_E$ and not a Lagrangian caustic of $\Ered$, whose differential graph
	projects regularly (Section~\ref{sec:geom}).
	
	The paper is built so that this chain is visible from the architecture:
	\begin{equation}\label{eq:spine}
		\boxed{\;
			\text{reconstruction correspondence}\;\longrightarrow\;
			\text{discriminant}\;\longrightarrow\;
			\text{fold}\;\longrightarrow\;
			\text{variational resolution}\;}
	\end{equation}
	Section~\ref{sec:setting} fixes the geometric datum and all definitions.
	Section~\ref{sec:loss} proves that the least-energy function is
	informationally incomplete. Section~\ref{sec:structure} is the structural
	core: the covering (Theorem~\ref{thm:cover}), the fibrewise multiplicity
	(Proposition~\ref{prop:mult}), the fold (Theorem~\ref{thm:fold}), and the
	boundary/discriminant identification (Theorem~\ref{thm:boundary}).
	Section~\ref{sec:convex} isolates the exact role of momentum convexity.
	Section~\ref{sec:geom} reads the fold geometrically.
	Section~\ref{sec:variational} derives the variational resolution and the
	energy deficit. Section~\ref{sec:impl} states what an accompanying
	\textsc{Wolfram Language} script makes reproducible (exact discriminant and
	admissible region, the hypothesis dichotomy, and the fold expansion), and
	Section~\ref{sec:applications} treats two mechanical examples, all as
	consequences of the theory.
	
	\section{Geometric setting and the reconstruction correspondence}
	\label{sec:setting}
	
	We collect every object first; no result of this section depends on any
	hypothesis beyond $H\in C^{2}$.
	
	\begin{definition}[Hamiltonian system and Poincar\'e section]\label{def:ham}
		Let $(\R^{4},\omega)$ be the standard symplectic space with coordinates
		$(q_{1},q_{2},p_{1},p_{2})$ and $\omega=dq_{1}\wedge dp_{1}+dq_{2}\wedge
		dp_{2}$, and let $H\in C^{2}(\R^{4})$. Fix $E\in\R$; the energy level set is
		$\Sigma_E=\{H=E\}$. The \emph{Poincar\'e section} is the hypersurface
		$C=\{q_{1}=0\}$.
	\end{definition}
	
	\begin{definition}[Coisotropic reduction]\label{def:coiso}
		A hypersurface $C$ is coisotropic: at each $x\in C$ the symplectic
		orthogonal $T_{x}C^{\omega}\subset T_{x}C$ is one-dimensional. For
		$C=\{q_{1}=0\}$ one computes $\iota_{\p_{p_{1}}}\omega=-\,dq_{1}$, which
		vanishes on $TC$, so $T_{x}C^{\omega}=\R\,\p_{p_{1}}$.
	\end{definition}
	
	\begin{definition}[Characteristic foliation]\label{def:foliation}
		The line field $x\mapsto T_{x}C^{\omega}=\R\,\p_{p_{1}}$ is integrable; its
		leaves, the \emph{characteristic curves}, are the lines
		$\{(0,q_{2},\,\cdot\,,p_{2})\}$ parametrized by $p_{1}$. The resulting
		foliation of $C$ is denoted $\mathcal F$.
	\end{definition}
	
	\begin{definition}[Reduced space and projection]\label{def:reduced}
		The leaf space $\Cred:=C/\mathcal F\cong\R^{2}$ carries coordinates
		$b=(q_{2},p_{2})$ and the reduced symplectic form $dq_{2}\wedge dp_{2}$,
		with $\pi\colon C\to\Cred$, $\pi(0,q_{2},p_{1},p_{2})=(q_{2},p_{2})$, a
		surjective submersion satisfying $\pi^{*}(dq_{2}\wedge dp_{2})=\omega|_{C}$
		\cite{abraham1978}. The coordinate to be reconstructed, $p_{1}$, runs along
		the characteristic leaf.
	\end{definition}
	
	\begin{definition}[Energy-restricted observable]\label{def:observable}
		For $b=(q_{2},p_{2})\in\Cred$ the \emph{fibre function} is
		\[
		\hb\colon\R\to\R,\qquad \hb(p_{1})=H(0,q_{2},p_{1},p_{2}),
		\]
		the restriction of $H|_{C}$ to the characteristic leaf over $b$. The
		observable $H|_{C}$ is not constant along leaves, hence does not descend to
		$\Cred$; this is the source of the reconstruction problem.
	\end{definition}
	
	\begin{definition}[Reconstruction correspondence]\label{def:corr}
		The energy level set cut by the section is
		\[
		\Ssec:=\Sigma_E\cap C=\{(q_{2},p_{1},p_{2})\mid \hb(p_{1})=E\},
		\]
		and the \emph{reconstruction correspondence} is $\pi_E:=\pi|_{\Ssec}\colon
		\Ssec\to\Cred$. A \emph{reconstruction} over $U\subseteq\Cred$ is a
		continuous section of $\pi_E$ over $U$.
	\end{definition}
	
	\begin{definition}[Admissible region]\label{def:adm}
		$\Areg:=\pi_E(\Ssec)\subseteq\Cred$ is the set of $b$ for which
		\eqref{eq:constraint} has a real root.
	\end{definition}
	
	\begin{definition}[Fibrewise least-energy function]\label{def:marginal}
		$\Ered(b):=\inf_{p_{1}\in\R}\hb(p_{1})$. When the infimum is attained,
		$\Ered(b)$ is the least value of the energy along the leaf over $b$.
	\end{definition}
	
	\begin{definition}[Critical locus and discriminant]\label{def:disc}
		The \emph{critical locus} of $\pi_E$ is
		\[
		\Crit(\pi_E)=\{x\in\Ssec\mid T_{x}C^{\omega}\subseteq T_{x}\Ssec\}
		=\{\,x\in\Ssec\mid \p_{p_{1}}H(x)=0\,\},
		\]
		the points where $\Ssec$ is tangent to the characteristic direction; its
		image $\DE:=\pi_E(\Crit(\pi_E))\subseteq\Cred$ is the \emph{discriminant}.
	\end{definition}
	
	We fix, once, the fibrewise hypotheses used below; each is invoked
	explicitly and separately.
	\begin{itemize}[leftmargin=2.4em]
		\item[(C)] \emph{Coercivity}: $\hb(p_{1})\to+\infty$ as $|p_{1}|\to\infty$,
		locally uniformly in $b$.
		\item[(U)] \emph{Unimodality}: $\hb\in C^{1}$ and there is a unique
		$m(b)$ with $\hb'<0$ on $(-\infty,m)$ and $\hb'>0$ on $(m,\infty)$.
		\item[(N)] \emph{Nondegeneracy}: $\p^{2}_{p_{1}p_{1}}H\neq0$ at the
		fibrewise critical point under consideration.
		\item[(R)] \emph{Base regularity} at $E$: $E$ is a regular value of
		$\Ered\colon\Cred\to\R$.
	\end{itemize}
	
	\section{Fibrewise reduction and loss of information}\label{sec:loss}
	
	We dispose first of the temptation to organize the theory around $\Ered$.
	
	\begin{proposition}[universal characterization of the reduction]
		\label{prop:universal}
		Order the extended-real functions on $C$ and on $\Cred$ pointwise, and let
		$\pi^{*}f=f\circ\pi$. The operator $\mathrm R(g)(b)=\inf_{\pi^{-1}(b)}g$ is the
		unique monotone map with
		\begin{equation}\label{eq:adjunction}
			\pi^{*}f\le g\iff f\le\mathrm R(g)\qquad(\forall f,g);
		\end{equation}
		equivalently, $\mathrm R(g)$ is the largest function on $\Cred$ whose pullback
		is dominated by $g$. In particular $\Ered=\mathrm R(H|_{C})$.
	\end{proposition}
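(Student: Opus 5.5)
We verify the adjunction \eqref{eq:adjunction} directly for $\mathrm R(g)(b)=\inf_{\pi^{-1}(b)}g$, then show that any monotone map satisfying it must equal $\mathrm R$, and finally specialize to $g=H|_{C}$. Everything takes place in the complete lattice of extended-real functions ordered pointwise. Completeness is what lets infima over fibres make sense: they may equal $-\infty$, and since $\pi$ is surjective (Definition~\ref{def:reduced}) every fibre is a nonempty leaf, so no empty infimum arises.

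\emph{Step 1 (the adjunction).} Fix $f$ on $\Cred$ and $g$ on $C$. The inequality $\pi^{*}f\le g$ means $f(\pi(x))\le g(x)$ for every $x\in C$. Grouping the points $x$ by their fibre, this says: for every $b$, $f(b)\le g(x)$ for all $x\in\pi^{-1}(b)$. By the definition of an infimum, that holds iff $f(b)\le\inf_{\pi^{-1}(b)}g=\mathrm R(g)(b)$ for every $b$, i.e.\ $f\le\mathrm R(g)$. Monotonicity of $\mathrm R$ is immediate: if $g\le g'$ then the infima over each fibre satisfy the same inequality.

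\emph{Step 2 (largest dominated function).} Put $f=\mathrm R(g)$ in \eqref{eq:adjunction}. The right-hand side holds trivially, so $\pi^{*}\mathrm R(g)\le g$. Conversely, any $f$ with $\pi^{*}f\le g$ satisfies $f\le\mathrm R(g)$ by the adjunction. Hence $\mathrm R(g)$ is the maximum of $\{f:\pi^{*}f\le g\}$. This is the ``equivalently'' clause; read backwards it also yields \eqref{eq:adjunction} again.

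\emph{Step 3 (uniqueness).} Suppose $\mathrm R'$ also satisfies \eqref{eq:adjunction}. Then for all $f$ we have $f\le\mathrm R(g)\iff f\le\mathrm R'(g)$. Taking $f=\mathrm R(g)$ gives $\mathrm R(g)\le\mathrm R'(g)$, and taking $f=\mathrm R'(g)$ gives the reverse inequality. This is the standard uniqueness of adjoints, and it uses neither monotonicity nor any structure beyond the partial order. For the final claim, $\pi^{-1}(b)$ is the leaf $\{(0,q_{2},p_{1},p_{2}):p_{1}\in\R\}$, on which $H|_{C}$ restricts to $\hb$ (Definition~\ref{def:observable}). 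So $\mathrm R(H|_{C})(b)=\inf_{p_{1}}\hb(p_{1})=\Ered(b)$ by Definition~\ref{def:marginal}.

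No step is a real obstacle. The only point needing care is the extended-real convention: $\Ered$ may take the value $-\infty$ when (C) fails, and the lattice must be complete for $\mathrm R$ to be defined everywhere. I would state this convention explicitly at the start of the proof.
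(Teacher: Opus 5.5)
Your proof is correct and follows the paper's argument. The adjunction comes from grouping the inequality $f(\pi(c))\le g(c)$ by fibres, and uniqueness holds because any $\mathrm R'$ satisfying \eqref{eq:adjunction} must return the maximum of $\{f:\pi^{*}f\le g\}$, which you make explicit by substituting $f=\mathrm R(g)$ and $f=\mathrm R'(g)$; your appeal to surjectivity of $\pi$ is harmless but unnecessary, since an empty infimum is simply $+\infty$ in the extended reals.
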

	
	\begin{proof}
		$\pi^{*}f\le g$ means $f(\pi(c))\le g(c)$ for all $c$; grouping by fibres,
		$f(b)\le\inf_{\pi^{-1}(b)}g=\mathrm R(g)(b)$, which is \eqref{eq:adjunction}.
		Any $\mathrm R'$ obeying \eqref{eq:adjunction} satisfies $\mathrm R'(g)=\sup\{f:
		\pi^{*}f\le g\}$, attained by $\mathrm R(g)$; hence uniqueness.
	\end{proof}
	
	The infimum is the \emph{value} of a canonical operator, not an ad hoc recipe.
	Property \eqref{eq:adjunction} is precisely the statement that $\mathrm R$ is
	the right adjoint of the pullback $\pi^{*}$ for the pointwise order; we use this
	only as an interpretation, to record that $\Ered=\mathrm R(H|_{C})$ is the
	canonical scalar reduction of the fibre observable, determined by a universal
	property rather than chosen. No analytic hypothesis enters
	Proposition~\ref{prop:universal}: coercivity, unimodality and nondegeneracy
	concern the regularity of $\Ered$, not its existence. But canonical is not the
	same as information-complete, and the next proposition shows that this scalar
	reduction forgets exactly what the reconstruction needs.
	
	\begin{proposition}[non-injectivity]\label{prop:lossy}
		The assignment $H|_{C}\mapsto\Ered$ is not injective. There exist $C^{\infty}$
		observables that are strictly convex and coercive along every leaf, share the
		same $\Ered$ (hence the same $\Areg$ at every $E$), yet have different fibrewise
		level sets $\{p_{1}:\hb=E\}$.
	\end{proposition}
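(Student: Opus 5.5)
The plan is to exhibit an explicit pair of observables that differ only in the curvature along the leaf and have the same minimum value and location. I take
\[
H_{a}(0,q_{2},p_{1},p_{2})=\tfrac{a}{2}\,p_{1}^{2}+V(q_{2},p_{2}),\qquad a>0,
\]
with $V\in C^{\infty}(\R^{2})$ fixed, say $V=\tfrac12(q_{2}^{2}+p_{2}^{2})$. I compare $a=1$ with $a=2$, extending each to $\R^{4}$ by the same formula for all $q_{1}$.

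First, I check the fibrewise properties. For each $b$, $\hb(p_{1})=\tfrac a2p_{1}^{2}+V(b)$ has $\hb''=a>0$, so it is strictly convex with nonvanishing second derivative. It also tends to $+\infty$ as $|p_{1}|\to\infty$, uniformly in $b$, since the $p_{1}$-growth does not depend on $b$. Both observables are $C^{\infty}$.

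Second, I compute the least-energy function. The infimum is attained at $p_{1}=0$, so $\Ered(b)=V(b)$ for both values of $a$. Since $\Areg=\{b:\Ered(b)\le E\}$ whenever the infimum is attained and $\hb$ is continuous and coercive (by the intermediate value theorem), both observables also have the same $\Areg$ at every $E$. Proposition~\ref{prop:universal} is not needed beyond the definition of $\Ered$.

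Third, I compare the fibrewise level sets. For $b$ with $V(b)<E$, the roots are $p_{1}=\pm\sqrt{2(E-V(b))/a}$, which differ for $a=1$ and $a=2$. The level sets over every interior point of $\Areg$ are therefore distinct, and so the surfaces $\Ssec$ differ, while the maps $H_{a}|_{C}$ are distinct and have the same $\Ered$. This proves non-injectivity.

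There is no substantive obstacle; the only care needed is in the word ``every''. Coercivity must be locally uniform in $b$, and strict convexity should hold on every leaf, including those where $V(b)\ge E$. Both hold trivially for this quadratic family. If ``same $\Areg$ at every $E$'' is read as including values of $E$ at which the interior of $\Areg$ is empty, it still holds, because the two $\Areg$ are equal as sets for every $E$.
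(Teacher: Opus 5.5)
Your proof is correct and is essentially the paper's argument: two fibre functions $\tfrac{a}{2}p_{1}^{2}+W(b)$ with different leaf curvatures share the fibrewise minimum $W(b)$ at $p_{1}=0$, so they have the same $\Ered$, while their roots $\pm\sqrt{2(E-W(b))/a}$ differ wherever $E>W(b)$. The only difference is cosmetic: the paper lets $W$ be an arbitrary smooth function and uses $a=1$ and $a=4$, whereas you fix $V=\tfrac12(q_{2}^{2}+p_{2}^{2})$ and use $a=1$ and $a=2$.
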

	
	\begin{proof}
		Fix any $W\in C^{\infty}(\Cred)$ and set, on $C$, $\hb^{(1)}=\tfrac12
		p_{1}^{2}+W(b)$ and $\hb^{(2)}=2p_{1}^{2}+W(b)$. Both attain fibrewise infimum
		$W(b)$ at $p_{1}=0$, so $\Ered_{1}=\Ered_{2}=W$; both have positive fibre
		curvature. Their level sets are $p_{1}=\pm\sqrt{2(E-W)}$ and
		$p_{1}=\pm\sqrt{(E-W)/2}$, distinct whenever $E>W(b)$.
	\end{proof}
	
	\begin{corollary}[silhouette versus lifted geometry]\label{cor:silhouette}
		The least-energy function determines the admissible region,
		$\Areg=\{\Ered\le E\}$ (Proposition~\ref{prop:mult}), but does not determine
		$\Ssec$: it discards the fibre curvature $\hb''$, and with it the rate
		$\sqrt{2(E-\Ered)/\hb''}$ at which the two lifted branches separate,
		precisely the datum that Theorem~\ref{thm:fold} identifies with the fold. The
		reconstruction correspondence retains the lifted geometry; the least-energy
		function retains only the silhouette.
	\end{corollary}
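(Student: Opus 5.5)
The corollary bundles three claims, and I will prove them in order: the identity $\Areg=\{\Ered\le E\}$; the failure of $\Ered$ to determine $\Ssec$; and the identification of what is lost with the branch-separation rate.

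\emph{The identity $\Areg=\{\Ered\le E\}$.} Under (C) and continuity of $\hb$, the infimum defining $\Ered(b)$ is attained, so $\Ered(b)=\min\hb$. The inclusion $\Areg\subseteq\{\Ered\le E\}$ needs nothing: if $\hb(p_1)=E$ for some $p_1$, then $\Ered(b)\le E$. For the converse, suppose $\Ered(b)\le E$. By coercivity $\hb\to+\infty$ at both ends, so the intermediate value theorem gives a root of $\hb=E$ on each side of the minimizer; when $\Ered(b)=E$ the minimizer itself is a root. Hence $b\in\Areg$. This is the part of Proposition~\ref{prop:mult} invoked in the statement, and only coercivity is used. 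Without (C) the identity can fail when the infimum is not attained, for instance when $\Ered(b)=E$ is approached only as $|p_1|\to\infty$. I will state this caveat explicitly.

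\emph{$\Ered$ does not determine $\Ssec$.} This is exactly the pair of Proposition~\ref{prop:lossy}. For $\hb^{(1)}$ and $\hb^{(2)}$ the function $\Ered=W$ is the same, hence $\{\Ered\le E\}$ is the same. Yet the fibres of $\Ssec$ over each $b$ with $W(b)<E$ are $\{\pm\sqrt{2(E-W)}\}$ and $\{\pm\sqrt{(E-W)/2}\}$ respectively, so the two sets $\Ssec$ differ.

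\emph{What is discarded.} The two observables agree in the value and location of the fibrewise minimum and differ only in $\hb''$, which equals $1$ in one case and $4$ in the other. The branch points are therefore $p_\pm=\pm\sqrt{2(E-\Ered)/\hb''}$, with $p_c=0$. To justify calling $\hb''$ the lost datum in general, I will Taylor expand at a nondegenerate minimizer $m(b)$ under (N):
\[
\hb(p_1)=\Ered(b)+\tfrac12\hb''(m)(p_1-m)^2+o\bigl((p_1-m)^2\bigr).
\]
Solving $\hb=E$ gives $p_\pm-m\sim\pm\sqrt{2(E-\Ered)/\hb''(m)}$. The zeroth-order term is fixed by $\Ered$, while the leading separation coefficient depends on $\hb''$, which $\Ered$ does not see, as the example shows. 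The link to the fold is only a forward reference to Theorem~\ref{thm:fold}, so nothing more needs proving here.

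The only delicate point is the attainment and two-sided intermediate-value argument in the first claim. It must be phrased so that it relies on (C) alone, and it must also handle the boundary case $\Ered(b)=E$.
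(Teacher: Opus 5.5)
Your proposal is correct and follows essentially the paper's own route. There the corollary is read off from three ingredients: the example of Proposition~\ref{prop:lossy} (same $\Ered=W$, fibre curvatures $1$ and $4$, distinct level sets), the identity $\Areg=\{\Ered\le E\}$ of Proposition~\ref{prop:mult}, and the leading-order expansion of Remark~\ref{rem:taylor}, with Theorem~\ref{thm:fold} entering only as a forward reference. Your one refinement, proving $\Areg=\{\Ered\le E\}$ directly from attainment of the minimum and the intermediate value theorem, is sound and shows that (C) alone suffices for that identity, whereas the cited proposition also assumes (U).
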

	
	This forces $(\Ssec,\pi_E)$, not $\Ered$, to be the object of study.
	
	\section{Structural theory of the reconstruction}\label{sec:structure}
	
	Each result carries its minimal hypothesis; the role of convexity is deferred
	to Section~\ref{sec:convex}.
	
	\begin{lemma}[local structure away from the critical locus]\label{lem:local}
		Let $x_{0}\in\Ssec$ with $\p_{p_{1}}H(x_{0})\neq0$. Then near $x_{0}$ the set
		$\Ssec$ is the graph of a $C^{1}$ function $p_{1}=\varphi(b)$ over $\Cred$, and
		$\pi_E$ is a local $C^{1}$ diffeomorphism at $x_{0}$.
	\end{lemma}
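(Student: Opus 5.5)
The plan is to apply the implicit function theorem to the defining equation of $\Ssec$ in the coordinates $(q_{2},p_{1},p_{2})$ on $C$. Write $x_{0}=(0,q_{2}^{0},p_{1}^{0},p_{2}^{0})$, $b_{0}=(q_{2}^{0},p_{2}^{0})$, and set $G(b,p_{1}):=H(0,q_{2},p_{1},p_{2})-E=\hb(p_{1})-E$. Since $H\in C^{2}$, $G$ is $C^{2}$ (in particular $C^{1}$) on $\Cred\times\R\cong C$. Moreover $G(b_{0},p_{1}^{0})=0$ because $x_{0}\in\Ssec$, and $\p_{p_{1}}G(b_{0},p_{1}^{0})=\p_{p_{1}}H(x_{0})\neq0$ by hypothesis.

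The implicit function theorem then provides open neighbourhoods $V\ni b_{0}$ in $\Cred$ and $I\ni p_{1}^{0}$ in $\R$, together with a unique $C^{1}$ map $\varphi\colon V\to I$, such that $\Ssec\cap(V\times I)=\{(b,\varphi(b)) : b\in V\}$. This is exactly the graph statement. Note that $\Ssec$ here means the zero set of $G$ inside $C$, with the identification of Definition~\ref{def:corr}.

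For the second claim, observe that $\pi$ is the coordinate projection $(b,p_{1})\mapsto b$. Restricted to the graph neighbourhood $W:=\Ssec\cap(V\times I)$, the map $\pi_E$ is therefore a continuous bijection $W\to V$ with continuous inverse $b\mapsto(b,\varphi(b))$. Near $x_{0}$, $\Ssec$ is an embedded $C^{1}$ surface, since $dG\neq0$ there (its $p_{1}$-component is nonzero), so $W$ is a regular level set. On this surface $\pi_E$ is $C^{1}$, as the restriction of a linear projection, and its inverse is $C^{1}$ because $\varphi$ is. Hence $\pi_E|_{W}$ is a $C^{1}$ diffeomorphism onto $V$.

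Equivalently, one can check the differential directly. We have $T_{x}\Ssec=\ker dG_{x}$, and because $\p_{p_{1}}H\neq0$ this kernel does not contain $\p_{p_{1}}=\ker d\pi_{x}$. So $d\pi_E$ is injective between $2$-dimensional spaces, which is consistent with Definition~\ref{def:disc}: $x_{0}\notin\Crit(\pi_E)$.

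There is no real obstacle here. The only point needing care is the regularity bookkeeping. $H\in C^{2}$ gives $G\in C^{1}$ at least, which is all the implicit function theorem needs for a $C^{1}$ conclusion. One should also state that the neighbourhood is taken in $\Ssec$ with the subspace topology, so that "near $x_{0}$" excludes the other sheet (for instance the second root of $\hb=E$) that lies over the same $b$ but outside $I$.
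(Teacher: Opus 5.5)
Your proposal is correct and follows essentially the same route as the paper. The paper applies the implicit function theorem to $\hb(p_{1})-E$ at $x_{0}$ to get the $C^{1}$ graph $p_{1}=\varphi(b)$, then observes that in the chart $b\mapsto(b,\varphi(b))$ the projection $\pi_E$ is the identity on $b$; your added check of the differential and your care in choosing the neighbourhood $I$ are consistent refinements, not a different argument.
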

	
	\begin{proof}
		Apply the implicit function theorem to $\Phi(b,p_{1})=\hb(p_{1})-E$ at
		$x_{0}$: $\Phi\in C^{1}$ and $\p_{p_{1}}\Phi(x_{0})=\p_{p_{1}}H(x_{0})\neq0$,
		so $\Phi=0$ solves locally for $p_{1}=\varphi(b)\in C^{1}$. In the chart
		$b\mapsto(b,\varphi(b))$ the projection $\pi_E$ is the identity on $b$, hence a
		local diffeomorphism.
	\end{proof}
	
	\begin{theorem}[proper reconstruction and covering structure]\label{thm:cover}
		Assume $H\in C^{1}$.
		\begin{enumerate}[label=\textup{(\alph*)},leftmargin=2.4em]
			\item On $\Ssec\setminus\Crit(\pi_E)$, $\pi_E$ is a local diffeomorphism
			\textup{(}Lemma~\ref{lem:local}; no further hypothesis\textup{)}.
			\item Under \textup{(C)}, $\pi_E$ is proper.
			\item Under \textup{(C)}, $\pi_E$ restricted over $\Cred\setminus\DE$ is a
			covering map of locally constant multiplicity, equal on each component to
			the number of fibrewise roots of \eqref{eq:constraint}.
		\end{enumerate}
	\end{theorem}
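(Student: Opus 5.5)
Part (a) is immediate from Lemma~\ref{lem:local}. At any $x\in\Ssec\setminus\Crit(\pi_E)$ we have $\p_{p_1}H(x)\neq0$ by Definition~\ref{def:disc}, so the lemma applies pointwise. Note that the lemma only needs $\Phi\in C^1$, which matches the standing assumption $H\in C^1$.

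For (b) I would show that $\pi_E^{-1}(K)$ is compact for every compact $K\subset\Cred$. Since $\Ssec$ is closed in $C$ as the zero set of the continuous function $H|_C-E$, $\pi_E^{-1}(K)=\Ssec\cap(K\times\R_{p_1})$ is closed. Boundedness comes from (C). Local uniformity, together with a finite-cover argument over the compact $K$, gives an $R$ with $\hb(p_1)>E$ for all $b\in K$ and $|p_1|>R$. Hence $\pi_E^{-1}(K)\subset K\times[-R,R]$, so it is compact. I would also record that a proper continuous map into a locally compact Hausdorff space is closed, since (c) uses this.

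For (c), work over $V:=\Cred\setminus\DE$. First, $V$ is open: $\Crit(\pi_E)$ is closed in $\Ssec$ (zero set of $\p_{p_1}H$), and a proper map is closed, so $\DE=\pi_E(\Crit(\pi_E))$ is closed. Over $V$ every point of $\pi_E^{-1}(V)$ is noncritical, so by (a) the restriction $\pi_E^{-1}(V)\to V$ is a local homeomorphism. It is also proper, as the restriction of a proper map to a full preimage. A proper local homeomorphism between Hausdorff, locally compact spaces is a finite covering map, and the plan is to prove this directly. For $b_0\in V$, the fibre $\pi_E^{-1}(b_0)$ is compact and discrete (local injectivity), hence finite, say $\{x_1,\dots,x_k\}$. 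By Lemma~\ref{lem:local}, choose disjoint open neighbourhoods $W_i\ni x_i$ each mapped homeomorphically onto an open $U_i\ni b_0$; concretely, these are graphs of $C^1$ functions $\varphi_i$ with $\varphi_i(b_0)$ distinct. The set $\Ssec\setminus\bigcup W_i$ is closed, so its image $F$ is closed by properness and does not contain $b_0$. Then $U:=\bigcap U_i\setminus F$, shrunk to a connected neighbourhood inside $V$, is evenly covered with exactly $k$ sheets. Finally, $b\mapsto\#\pi_E^{-1}(b)$ is locally constant, hence constant on components of $V$. By definition of $\Ssec$, $\#\pi_E^{-1}(b)=\#\{p_1:\hb(p_1)=E\}$ is the number of roots of \eqref{eq:constraint}.

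The main obstacle is the even-covering step, specifically ruling out extra preimages that escape to infinity or appear near $b_0$ from far along the leaf. This is exactly where properness from (C) is indispensable: without it, roots could enter from $|p_1|=\infty$ and the count would jump. I would handle it with the closed-map argument above rather than with explicit estimates. A minor point is components of $V$ over which the fibre is empty, i.e.\ outside $\Areg$. These give multiplicity $0$, which is consistent with the count, and I would note that the covering is then empty there.
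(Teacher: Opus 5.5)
Your proof is correct and follows the paper's own route: (a) comes from Lemma~\ref{lem:local}, (b) from a uniform radius $R$ supplied by (C) together with closedness of $\Ssec$, and (c) from the fact that a proper local homeomorphism is a finite covering with locally constant fibre cardinality. The only differences are that you justify the openness of $\Cred\setminus\DE$ (via closedness of $\Crit(\pi_E)$ and properness of $\pi_E$), which the paper simply asserts, and that you prove the even-covering step directly instead of citing it.
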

	
	\begin{proof}
		(a) is Lemma~\ref{lem:local}, valid at every $x\in\Ssec$ with
		$\p_{p_{1}}H\neq0$; over $\Cred\setminus\DE$ every preimage satisfies this by
		Definition~\ref{def:disc}.
		
		(b) Let $K\subseteq\Cred$ be compact. By (C) there is $R$ with $\hb(p_{1})>E$
		for $|p_{1}|>R$ and $b\in K$; hence $\pi_E^{-1}(K)\subseteq K\times[-R,R]$ is
		bounded, and closed because $\Ssec$ is closed. Thus $\pi_E^{-1}(K)$ is compact.
		
		(c) Over the open set $\Cred\setminus\DE$, $\pi_E$ is by (a) a local
		homeomorphism and by (b) proper, hence a closed and open map with discrete
		fibres; a proper local homeomorphism onto a locally compact Hausdorff space is
		a covering on each connected component, and the number of sheets equals the
		(locally constant) fibre cardinality.
	\end{proof}
	
	\begin{proposition}[fibrewise multiplicity]\label{prop:mult}
		Assume $H\in C^{0}$, \textup{(C)} and \textup{(U)}. Then $\Ered$ is continuous
		and, for every $b$,
		\[
		\#\pi_E^{-1}(b)=
		\begin{cases}
			2,& \Ered(b)<E,\\
			1,& \Ered(b)=E,\\
			0,& \Ered(b)>E.
		\end{cases}
		\]
		Consequently $\Areg=\{\Ered\le E\}$ and $\DE=\{\Ered=E\}$.
	\end{proposition}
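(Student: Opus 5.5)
The proof splits into three parts. First, under (C) and (U), $\Ered(b)=\hb(m(b))$ is attained. Second, we show $\Ered$ (and $m$) are continuous, using the local uniformity in (C). Third, we count roots of $\hb=E$ on each monotone branch of $\hb$.

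\emph{Attainment and the count.} By (U), $\hb$ is strictly decreasing on $(-\infty,m(b)]$ and strictly increasing on $[m(b),\infty)$. Hence $\Ered(b)=\hb(m(b))$ and $m(b)$ is the unique minimizer. By (C), $\hb(p_1)\to+\infty$ at both ends. If $\Ered(b)<E$, the intermediate value theorem on each of the two strictly monotone pieces gives exactly one root in $(-\infty,m)$ and exactly one in $(m,\infty)$; $m$ itself is not a root. So there are two preimages. If $\Ered(b)=E$, the only root is $m(b)$, since everywhere else $\hb>\hb(m)=E$ by strict monotonicity. If $\Ered(b)>E$, there is no root. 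The consequences follow at once. First, $\Areg=\{\Ered\le E\}$ is exactly the set with at least one root. Second, the only roots with $\hb'(p_1)=\p_{p_1}H=0$ are at $p_1=m(b)$, by (U); such a root exists precisely when $\Ered(b)=E$, so $\Crit(\pi_E)=\{(b,m(b)):\Ered(b)=E\}$ and $\DE=\{\Ered=E\}$.

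\emph{Continuity of $\Ered$.} This is the step needing care, since $H$ is only assumed $C^0$ (with (U) implicitly requiring fibrewise $C^1$). Upper semicontinuity holds for any infimum of continuous functions: $\Ered=\inf_{p_1}h_b(p_1)$ with each $b\mapsto h_b(p_1)$ continuous. For lower semicontinuity, fix $b_0$ and a compact neighbourhood $K$. By the local uniformity in (C), there is $R$ with $\hb(p_1)>\Ered(b_0)+1$ for all $b\in K$ and $|p_1|>R$. Then for $b$ near $b_0$ the infimum may be taken over $[-R,R]$, since $\Ered(b)\le \hb(m(b_0))$, which is close to $\Ered(b_0)$ by continuity. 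An infimum over a compact fibre of a jointly continuous function on $K\times[-R,R]$ is continuous, by uniform continuity on the compact set. Hence $\Ered$ is continuous. This continuity is stated in the proposition but is not needed for the counting; I would note that it gives closedness of $\Areg$ and of $\DE$.

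The main obstacle is only bookkeeping: making the truncation uniform in $b$. It is exactly what (C)'s local uniformity is for, and I would handle it first as above. The rest is the intermediate value theorem combined with strict monotonicity.
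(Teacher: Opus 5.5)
Your proof is correct and follows the paper's own argument. The paper counts roots by strict monotonicity of $\hb$ on either side of $m(b)$ together with coercivity at both ends, and obtains $\DE=\{\Ered=E\}$ from the fact that $m(b)$ is the only fibrewise critical point. Your truncation argument for the continuity of $\Ered$ spells out, correctly, what the paper asserts in one line from local uniform coercivity, and your observation that attainment already follows from (U) is a harmless refinement.
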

	
	\begin{proof}
		Local uniform coercivity with $\hb$ continuous makes the infimum attained and
		$\Ered$ continuous. Fix $b$. By (U) the unique fibrewise critical point is the
		minimum $m(b)$, with $\Ered(b)=\hb(m)$. If $\Ered(b)<E$: on $(-\infty,m)$,
		$\hb$ decreases strictly from $+\infty$ (by (C)) to $\Ered(b)<E$, so
		$\hb=E$ has exactly one root there; likewise one root on $(m,\infty)$; total
		two. If $\Ered(b)=E$: the only solution is $m$. If $\Ered(b)>E$: none. The
		unique fibrewise critical point lies on $\Ssec$ iff $\hb(m)=E$, i.e.\ iff
		$\Ered(b)=E$; this is $\DE=\{\Ered=E\}$. That $\Areg=\{\Ered\le E\}$ restates
		the first two cases.
	\end{proof}
	
	\begin{remark}\label{rem:mult-hyp}
		Proposition~\ref{prop:mult} uses monotonicity (from (U)) and coercivity only.
		Neither strict convexity nor nondegeneracy is used, and neither would sharpen
		the count.
	\end{remark}
	
	\begin{lemma}[critical branch and envelope identity]\label{lem:branch}
		Let $x_{0}\in\Ssec$ with $\p_{p_{1}}H(x_{0})=0$ and
		$\p^{2}_{p_{1}p_{1}}H(x_{0})\neq0$ \textup{(N)}, and $H\in C^{k}$ near $x_{0}$,
		$k\ge2$. Then there is a unique $C^{k-1}$ branch $p_{1}=p_{1}^{c}(b)$ near
		$b_{0}=\pi(x_{0})$ with $\p_{p_{1}}H(0,q_{2},p_{1}^{c},p_{2})=0$, and the local
		critical value $\Ered^{c}(b):=H(0,q_{2},p_{1}^{c}(b),p_{2})\in C^{k}$ satisfies
		\begin{equation}\label{eq:envelope}
			d_{b}\Ered^{c}=d_{b}H\big|_{p_{1}=p_{1}^{c}(b)} .
		\end{equation}
		When the critical point is the global minimum \textup{(}e.g.\ under
		\textup{(U))}, $\Ered^{c}=\Ered$.
	\end{lemma}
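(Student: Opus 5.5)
The plan is to apply the implicit function theorem to the fibrewise derivative rather than to the energy itself. Set
\[
G(b,p_{1}):=\p_{p_{1}}H(0,q_{2},p_{1},p_{2}).
\]
Since $H\in C^{k}$ near $x_{0}$, $G\in C^{k-1}$ with $k-1\ge1$. The two hypotheses give $G(b_{0},p_{1}(x_{0}))=0$ and $\p_{p_{1}}G(b_{0},p_{1}(x_{0}))=\p^{2}_{p_{1}p_{1}}H(x_{0})\neq0$, which is (N). The implicit function theorem then yields neighbourhoods $U\ni b_{0}$ and $I\ni p_{1}(x_{0})$ and a unique $C^{k-1}$ function $p_{1}^{c}\colon U\to I$ whose graph is exactly $\{G=0\}\cap(U\times I)$. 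This gives existence, uniqueness within the neighbourhood, and the stated regularity of the critical branch.

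Next I will establish the envelope identity and the gain of one derivative for $\Ered^{c}$. The composite $\Ered^{c}(b)=H(0,q_{2},p_{1}^{c}(b),p_{2})$ is a priori only $C^{k-1}$. By the chain rule,
\[
d_{b}\Ered^{c}=d_{b}H\big|_{p_{1}^{c}}+\p_{p_{1}}H\big|_{p_{1}^{c}}\,d_{b}p_{1}^{c},
\]
and the second term vanishes identically because $G(b,p_{1}^{c}(b))=0$. This proves \eqref{eq:envelope}. Now $d_{b}H$ is $C^{k-1}$ in $(b,p_{1})$ and is composed with the $C^{k-1}$ map $b\mapsto(b,p_{1}^{c}(b))$, so $d_{b}\Ered^{c}\in C^{k-1}$ and hence $\Ered^{c}\in C^{k}$. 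The point that needs stating explicitly is this bootstrap: the regularity is read off from \eqref{eq:envelope}, not from the composite formula.

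Finally, suppose the critical point is the global fibrewise minimum, for instance under (U), where $m(b)$ is the unique zero of $\hb'$. For $b$ near $b_{0}$ I need $m(b)=p_{1}^{c}(b)$. I would argue this via continuity of $m$. Under (C) and (U), $m$ is continuous: it is the unique minimiser of a continuous, locally uniformly coercive family, so any limit point of $m(b_{n})$ is a minimiser for $b_{0}$ and must equal $m(b_{0})$. Since $m(b_{0})=p_{1}(x_{0})$, for $b$ near $b_{0}$ we have $m(b)\in I$ and $G(b,m(b))=0$. Uniqueness in the implicit function theorem then gives $m=p_{1}^{c}$, so $\Ered(b)=\hb(m(b))=\Ered^{c}(b)$.

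The main obstacle is this last identification. Without some continuity of the minimiser, the global minimum could jump outside $I$, and the local uniqueness from the implicit function theorem would not reach it. I will therefore state clearly that the conclusion $\Ered^{c}=\Ered$ holds on a neighbourhood of $b_{0}$. I will also note that (C) is used there, or that (U) alone suffices if the hypothesis is phrased as ``the branch stays the global minimum near $b_{0}$.''
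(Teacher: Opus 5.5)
Your proposal is correct and follows the paper's proof. The implicit function theorem applied to $\p_{p_{1}}H=0$ gives the $C^{k-1}$ branch, and the chain rule, with the term $\p_{p_{1}}H\cdot d_{b}p_{1}^{c}$ cancelling, gives the envelope identity; your explicit bootstrap $d_{b}\Ered^{c}\in C^{k-1}\Rightarrow\Ered^{c}\in C^{k}$ is a useful addition that the paper leaves implicit.

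For the last clause you need neither (C) nor continuity of the minimiser, so the ``obstacle'' you identify does not arise under (U). By construction $p_{1}^{c}(b)$ is a zero of $\hb'$, and (U) makes $m(b)$ the only zero of $\hb'$. Hence $p_{1}^{c}=m$ and $\Ered^{c}=\Ered$ on the whole implicit-function neighbourhood.
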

	
	\begin{proof}
		Apply the implicit function theorem to $\p_{p_{1}}H=0$ at $x_{0}$: its
		$p_{1}$-derivative is $\p^{2}_{p_{1}p_{1}}H(x_{0})\neq0$, giving
		$p_{1}^{c}\in C^{k-1}$. Differentiating $\Ered^{c}(b)=H(0,q_{2},p_{1}^{c}(b),
		p_{2})$ and using $\p_{p_{1}}H=0$ along the branch cancels the term
		$\p_{p_{1}}H\cdot d_{b}p_{1}^{c}$, leaving \eqref{eq:envelope}.
	\end{proof}
	
	\begin{theorem}[fold along the branch locus]\label{thm:fold}
		Let $x_{0}\in\Crit(\pi_E)$ satisfy \textup{(N)}, i.e.\
		$\p^{2}_{p_{1}p_{1}}H(x_{0})\neq0$, and the transversality
		$d_{b}\Ered^{c}(b_{0})\neq0$ \textup{(T)}, with $\Ered^{c}$ as in
		Lemma~\ref{lem:branch} and $H$ smooth near $x_{0}$. Then $\Ssec$ is a smooth
		surface near $x_{0}$ and there are local coordinates $(u,v)$ on $\Ssec$ and
		$(u,s)$ on $\Cred$ in which
		\[
		\pi_E\colon (u,v)\longmapsto(u,v^{2}) ,
		\]
		i.e.\ $\pi_E$ has a Whitney fold at $x_{0}$
		\cite[Ch.~III,\S4]{golubitsky1973}. The sign of $\p^{2}_{p_{1}p_{1}}H(x_{0})$
		fixes the side of $\{\Ered^{c}=E\}$ on which the two real preimages lie.
	\end{theorem}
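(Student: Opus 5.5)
The plan is to reduce the problem, by a fibre-preserving change of the $p_{1}$ coordinate, to a normal form $E$-level of a quadratic in one variable, and then read off the fold. First I establish smoothness of $\Ssec$ near $x_{0}$. Write $\Phi(b,p_{1})=H(0,q_{2},p_{1},p_{2})-E$. At $x_{0}$ we have $\p_{p_{1}}\Phi=0$, so I need $d_{b}\Phi(x_{0})\neq0$. By the envelope identity \eqref{eq:envelope} of Lemma~\ref{lem:branch}, $d_{b}\Phi(x_{0})=d_{b}H|_{p_{1}=p_{1}^{c}(b_{0})}=d_{b}\Ered^{c}(b_{0})$, which is nonzero by (T). Hence $d\Phi(x_{0})\neq0$, and $\Ssec=\{\Phi=0\}$ is a smooth surface near $x_{0}$.

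Next comes the normal form. Using the critical branch $p_{1}^{c}(b)$ from Lemma~\ref{lem:branch}, set $w=p_{1}-p_{1}^{c}(b)$. Taylor's formula with integral remainder in $w$, together with $\p_{p_{1}}H=0$ along the branch, gives
\[
\Phi(b,p_{1})=\Ered^{c}(b)-E+w^{2}g(b,w),\qquad g(b_{0},0)=\tfrac12\,\p^{2}_{p_{1}p_{1}}H(x_{0})\neq0,
\]
with $g$ smooth. Let $\sigma=\operatorname{sign}g(b_{0},0)$ and define $v=w\sqrt{\sigma g(b,w)}$. The map $(b,w)\mapsto(b,v)$ is a local diffeomorphism, since $\p_{w}v=\sqrt{\sigma g(b_{0},0)}\neq0$ at the base point, and in these coordinates $\Phi=\Ered^{c}(b)-E+\sigma v^{2}$.

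Then I choose base coordinates. By (T), $s:=\sigma\,(E-\Ered^{c}(b))$ is a submersion at $b_{0}$, so it can be completed by a function $u(b)$ to a local chart $(u,s)$ on $\Cred$ centred at $b_{0}$. In the coordinates $(u,s,v)$ on a neighbourhood in $C$, the surface $\Ssec$ is $\{s=v^{2}\}$. It is therefore parametrized by $(u,v)$, and $\pi_E(u,v)=(u,s)=(u,v^{2})$, which is the Whitney fold. The side statement follows at once. Real preimages exist exactly where $s\ge0$, that is, where $\sigma(E-\Ered^{c})\ge0$. They lie where $\Ered^{c}<E$ if $\p^{2}_{p_{1}p_{1}}H(x_{0})>0$, and on the opposite side if it is negative, with two preimages $v=\pm\sqrt{s}$ strictly inside that side.

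The main obstacle is differentiability bookkeeping. The branch $p_{1}^{c}$ is only $C^{k-1}$, so the change of variables $w=p_{1}-p_{1}^{c}(b)$ loses a derivative. This is why the statement assumes $H$ smooth near $x_{0}$, and with that assumption I will simply take $k=\infty$. The other delicate point is that the square-root rescaling must be done with $g$ evaluated jointly in $(b,w)$, not just at $w=0$, so that the normal form holds exactly rather than to leading order. The Morse-lemma-with-parameters argument above handles this. I will also note that the normal form is an instance of the fold recognition criterion in \cite[Ch.~III,\S4]{golubitsky1973}: $\pi_E$ has corank one at $x_{0}$, the kernel is $\p_{p_{1}}$, and the conditions (N) and (T) are precisely the fold conditions for that map.
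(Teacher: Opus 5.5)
Your proposal is correct and follows the paper's proof. You obtain smoothness of $\Ssec$ from the envelope identity together with (T), then a parametrized Morse (splitting) normal form $\Phi=\pm v^{2}+(\Ered^{c}(b)-E)$, then a base chart $(u,s)$ with $s$ a multiple of $\Ered^{c}-E$. The only differences are cosmetic: you build the splitting coordinate explicitly via $w=p_{1}-p_{1}^{c}(b)$ and the integral-remainder factor $g$ rather than citing the lemma, and you absorb the sign and the factor $\tfrac12\p^{2}_{p_{1}p_{1}}H(x_{0})$ into $s$ and $v$, so no final rescaling of the target is needed.
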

	
	\begin{proof}
		By \eqref{eq:envelope}, $d_{b}\Ered^{c}(b_{0})=(\p_{q_{2}}H,\p_{p_{2}}H)(x_{0})$,
		and $\p_{p_{1}}H(x_{0})=0$; hence (T) is equivalent to $d(H|_{C})(x_{0})\neq0$,
		so $E$ is a regular value of $H|_{C}$ at $x_{0}$ and $\Ssec$ is a smooth
		surface near $x_{0}$. Write $\Phi(b,p_{1})=H(0,q_{2},p_{1},p_{2})-E$. Since
		$\p_{p_{1}}\Phi(x_{0})=0$ and $\p^{2}_{p_{1}}\Phi(x_{0})\neq0$, the
		parametrized Morse (splitting) lemma provides a fibre-preserving change of
		coordinate $p_{1}\mapsto v(b,p_{1})$, smooth with $v(x_{0})=0$, such that
		\[
		\Phi(b,p_{1})=\tfrac12\,\p^{2}_{p_{1}p_{1}}H(x_{0})\,v^{2}
		+\bigl(\Ered^{c}(b)-E\bigr),
		\]
		with no higher term in $v$. Set $s:=\Ered^{c}(b)-E$; by (T), $ds(b_{0})\neq0$,
		so $s$ extends to a coordinate on $\Cred$ with a complementary coordinate $u$
		along $\{s=0\}$. On $\Ssec=\{\Phi=0\}$ one has
		$s=-\tfrac12\p^{2}_{p_{1}p_{1}}H(x_{0})\,v^{2}$; parametrizing $\Ssec$ by
		$(u,v)$, the projection reads $(u,v)\mapsto(u,s)=(u,-\tfrac12\p^{2}_{p_{1}p_{1}}
		H\,v^{2})$, which after rescaling the target is $(u,v)\mapsto(u,v^{2})$. This is
		the fold normal form of \cite[Ch.~III,\S4]{golubitsky1973}; the two real
		preimages $v=\pm\sqrt{-2s/\p^{2}_{p_{1}p_{1}}H}$ exist on the side where
		$s\,\p^{2}_{p_{1}p_{1}}H<0$.
	\end{proof}
	
	\begin{remark}[Taylor reading]\label{rem:taylor}
		Expanding $\hb$ about $p_{1}^{c}$ gives
		$\hb(p_{1})-E=\tfrac12\hb''(p_{1}^{c})(p_{1}-p_{1}^{c})^{2}+O((p_{1}-p_{1}^{c})^{3})$,
		so the two roots coalesce as $p_{1}-p_{1}^{c}\sim\sqrt{2(E-\Ered^{c})/
			\hb''(p_{1}^{c})}$. This is the fold of Theorem~\ref{thm:fold} read to leading
		order; it is an illustration, not the proof.
	\end{remark}
	
	\begin{theorem}[boundary/discriminant identification]\label{thm:boundary}
		Assume $H\in C^{0}$, \textup{(C)}, \textup{(U)}.
		\begin{enumerate}[label=\textup{(\alph*)},leftmargin=2.4em]
			\item $\p\Areg\subseteq\DE=\{\Ered=E\}$ always.
			\item If in addition $\Ered\in C^{1}$ near the boundary
			\textup{(}e.g.\ under \textup{(N)} at the minimum, by
			Lemma~\ref{lem:branch}\textup{)} and \textup{(R)} holds at $E$, then
			$\{\Ered<E\}=\intr\Areg$ and
			\[
			\DE=\p\Areg=\{\Ered=E\},
			\]
			and $\pi_E$ is a two-sheeted covering over $\intr\Areg$.
			\item Without \textup{(R)} the equality can fail: $\DE$ may contain
			interior branch points \textup{(Counterexample~\ref{cex:hat})}.
		\end{enumerate}
	\end{theorem}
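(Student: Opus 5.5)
Part (a) uses only Proposition~\ref{prop:mult}, which gives $\Ered$ continuous, $\Areg=\{\Ered\le E\}$ and $\DE=\{\Ered=E\}$. Since $\Ered$ is continuous, $\{\Ered\le E\}$ is closed and $\{\Ered<E\}$ is open. Hence $\Areg$ is closed, $\p\Areg\subseteq\Areg$, and $\{\Ered<E\}\subseteq\intr\Areg$. A boundary point $b$ therefore lies in $\Areg$ but not in $\intr\Areg$. So $\Ered(b)\le E$ and $\Ered(b)\not<E$, which gives $\Ered(b)=E$, i.e.\ $b\in\DE$. This proves (a).

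For (b) we need the reverse inclusion $\{\Ered=E\}\subseteq\p\Areg$, which also yields $\intr\Areg=\{\Ered<E\}$. Take $b_0$ with $\Ered(b_0)=E$. By (R) and $\Ered\in C^1$ near $b_0$, $d\Ered(b_0)\neq0$. Choose a direction $w$ with $d\Ered(b_0)w>0$. Then $\Ered(b_0+tw)=E+t\,d\Ered(b_0)w+o(t)>E$ for small $t>0$. So points of $\Cred\setminus\Areg$ accumulate at $b_0$, while $b_0\in\Areg$, and hence $b_0\in\p\Areg$. Thus $\DE=\p\Areg=\{\Ered=E\}$. Since $\intr\Areg=\Areg\setminus\p\Areg=\{\Ered\le E\}\setminus\{\Ered=E\}$, we get $\intr\Areg=\{\Ered<E\}$. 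The covering claim follows from Theorem~\ref{thm:cover}(c): $\intr\Areg$ is open and disjoint from $\DE$, and by Proposition~\ref{prop:mult} the fibre cardinality there is exactly $2$.

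The parenthetical route to $C^1$ regularity also needs justification. Under (U), the minimizer $m(b)$ is the unique fibrewise critical point. Under (N) at $m(b_0)$ (with $H\in C^2$), Lemma~\ref{lem:branch} gives a $C^1$ critical branch, and by uniqueness of the critical point this branch coincides with $m(b)$ near $b_0$. The last sentence of Lemma~\ref{lem:branch} then gives $\Ered=\Ered^c\in C^1$. This step is routine but must be stated, since the statement assumes only $H\in C^0$ globally. In (b), regularity is an extra assumption used only near the boundary.

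For (c) I would simply cite Counterexample~\ref{cex:hat}, which appears later in the paper. The mechanism to check there is a point $b_0$ where $\Ered$ attains a local maximum equal to $E$, for instance a hat-shaped $W$ in $\hb=\tfrac12p_1^2+W(b)$ with $\max W=E$. Such a $b_0$ lies in $\DE$ but has a neighbourhood contained in $\{\Ered\le E\}=\Areg$, so it is interior and (R) fails there. The main obstacle is the step in (b). Continuity alone cannot exclude a level-$E$ point that is a local maximum of $\Ered$, so the argument must use the nonvanishing differential to push $\Ered$ above $E$ along some direction. That is exactly where (R) enters.
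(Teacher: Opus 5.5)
Your proof is correct and follows essentially the paper's route: (a) from continuity of $\Ered$ together with $\Areg=\{\Ered\le E\}$ and $\DE=\{\Ered=E\}$ (Proposition~\ref{prop:mult}); (b) from the fact that a regular level of a $C^{1}$ function is the topological boundary of its sublevel set, which you make explicit through the directional-derivative step, plus the two-point count; and (c) by citing Counterexample~\ref{cex:hat}. One small caveat is that Theorem~\ref{thm:cover}(c) is stated for $H\in C^{1}$, so under the bare $H\in C^{0}$ hypothesis you should get the two-sheeted covering over $\{\Ered<E\}$ directly: (C) and (U) make $m(b)$ and the roots $p_{1}^{-}(b)<m(b)<p_{1}^{+}(b)$ continuous in $b$, so $\pi_E^{-1}(\intr\Areg)$ is a disjoint union of two graphs.
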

	
	\begin{proof}
		(a) $\Ered$ is continuous (Proposition~\ref{prop:mult}), and for a continuous
		function $\p\{\Ered\le E\}\subseteq\{\Ered=E\}$; combine with $\Areg=\{\Ered\le
		E\}$ and $\DE=\{\Ered=E\}$ (Proposition~\ref{prop:mult}). (b) If $E$ is a
		regular value of $\Ered\in C^{1}$, then $\{\Ered=E\}$ is a $1$-manifold with no
		interior components and equals $\p\{\Ered\le E\}$, while $\{\Ered<E\}=\intr
		\Areg$; with $\DE=\{\Ered=E\}$ the equalities follow, and the two-sheeted count
		over $\intr\Areg=\{\Ered<E\}$ is Proposition~\ref{prop:mult}. (c) is
		Counterexample~\ref{cex:hat}.
	\end{proof}
	
	\begin{counterexample}[interior branch point; base regularity is essential]
		\label{cex:hat}
		On $C=\{q_{1}=0\}$ take $\hb(p_{1})=\tfrac12 p_{1}^{2}+W(b)$ with
		$W(b)=(q_{2}^{2}+p_{2}^{2}-1)^{2}$. Every leaf is strictly convex,
		nondegenerate and coercive, so (C), (U), (N) hold and $\Ered=W$. At $E=1$,
		writing $r^{2}=q_{2}^{2}+p_{2}^{2}$,
		\[
		\Areg=\{W\le1\}=\{r\le\sqrt2\},\quad \p\Areg=\{r=\sqrt2\},\quad
		\DE=\{W=1\}=\{r=\sqrt2\}\cup\{r=0\}.
		\]
		The origin, a local maximum of $W$, lies in $\intr\Areg$: it is an interior
		branch point of $\pi_E$, and $\DE\supsetneq\p\Areg$. No fibrewise hypothesis
		repairs this; the missing ingredient is base regularity (R), which fails at
		$E=1$ because $0$ is a critical value of $\Ered$.
	\end{counterexample}
	
	\begin{remark}[genericity in $E$; candidate transition energies]\label{rem:sard}
		Under (C), (U), (N) the function $\Ered$ is $C^{1}$, so by Sard's theorem
		\cite{milnor1965} (R) holds for almost every $E$. The critical values of
		$\Ered$ are the only candidate energies at which the topology of the sublevel
		sets $\Areg=\{\Ered\le E\}$ can change; asserting an actual topological
		transition at such an energy requires the usual additional
		(Morse-type) hypotheses, which we do not impose here.
		Counterexample~\ref{cex:hat} at $E=1$ is one such critical value, at which (R)
		fails.
	\end{remark}
	
	\section{The role of momentum convexity}\label{sec:convex}
	
	Two notions of momentum convexity must be kept apart: a differential one,
	which controls the second derivative, and the ordinary inequality, which does
	not.
	
	\begin{definition}[two momentum-convexity conditions]\label{def:convex}
		$H$ has a \emph{positive-definite momentum Hessian} if the matrix
		$\bigl(\p^{2}H/\p p_{i}\p p_{j}\bigr)$ is positive definite everywhere; we
		write $D^{2}_{pp}H\succ0$. Separately, $\hb$ is \emph{strictly convex} on the
		leaf over $b$ if $\hb(tp+(1-t)q)<t\,\hb(p)+(1-t)\hb(q)$ for all $p\neq q$ and
		$0<t<1$. For a mechanical Hamiltonian $H=\tfrac12 p^{\mathsf T}M(q)^{-1}p+V(q)$
		with $M(q)\succ0$ \cite{arnold,meyer} one has $D^{2}_{pp}H=M(q)^{-1}\succ0$.
	\end{definition}
	
	\begin{proposition}[what momentum convexity provides and what it does not]
		\label{prop:convex}
		$D^{2}_{pp}H\succ0$ implies $\p^{2}_{p_{1}p_{1}}H>0$, hence both \textup{(U)}
		and \textup{(N)} with positive sign; it thereby feeds
		Proposition~\ref{prop:mult}, Lemma~\ref{lem:branch} and
		Theorem~\ref{thm:fold}, and orients the fold. It does \emph{not} imply
		\textup{(C)} or \textup{(R)}. Ordinary strict convexity of $\hb$ implies
		\textup{(U)} but not \textup{(N)}.
	\end{proposition}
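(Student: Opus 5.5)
The positive parts are direct, and each negative part needs one counterexample. For the implications, I note that a positive-definite matrix has positive diagonal entries. Evaluating the quadratic form $\xi^{\mathsf T}D^{2}_{pp}H\,\xi$ at $\xi=e_{1}$ gives $\p^{2}_{p_{1}p_{1}}H>0$ everywhere, so $\hb''>0$ on every leaf. This is exactly (N), with positive sign, at any fibrewise critical point. For (U), $\hb'$ is strictly increasing, so it has at most one zero. The subtle point is existence of that zero: (U) as stated requires a sign change of $\hb'$, and $\hb''>0$ alone does not force one (e.g.\ $e^{p_{1}}$). So I will state the implication carefully. Either I read (U) as "at most one critical point, which is then the strict minimum", or I note that (U) in the full sense follows once the infimum is attained, which holds in particular under (C). I will phrase it as: $D^{2}_{pp}H\succ0$ gives strict monotonicity of $\hb'$, and together with attainment (e.g.\ under (C)) this gives (U). The feeding into Proposition~\ref{prop:mult}, Lemma~\ref{lem:branch} and Theorem~\ref{thm:fold} is then a citation. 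The orientation claim is the last sentence of Theorem~\ref{thm:fold}: with $\p^{2}_{p_{1}p_{1}}H>0$, the two preimages lie on the side $s<0$, i.e.\ $\{\Ered^{c}<E\}$.

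For the non-implications I will use explicit examples. For (C), take $H=\tfrac12(p_{1}^{2}+p_{2}^{2})+V(q)$ with $V$ unbounded below in $q_{2}$, say $V=-q_{2}^{2}$. Each $\hb$ is then still coercive, so pointwise-in-$b$ coercivity is not the right target. Better is a Hessian that degenerates at infinity, e.g.\ $\hb(p_{1})=\sqrt{1+p_{1}^{2}}-p_{1}\cdot\tfrac12$ type functions. Cleanest is $H=e^{p_{1}}+\tfrac12p_{2}^{2}+V(q)$, or $H=\sqrt{1+p_{1}^{2}+p_{2}^{2}}$ minus a linear term. Concretely, I take $H=\sqrt{1+p_{1}^{2}}+p_{1}+\tfrac12p_{2}^{2}$. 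Its momentum Hessian is $\mathrm{diag}((1+p_{1}^{2})^{-3/2},1)\succ0$, but $\hb\to 0^{+}$ plus constant as $p_{1}\to-\infty$, so (C) fails. For (R), I reuse Counterexample~\ref{cex:hat}: $H=\tfrac12(p_{1}^{2}+p_{2}^{2})+\dots$ would alter the example, so instead I take $H=\tfrac12p_{1}^{2}+\tfrac12p_{2}^{2}+U(q_{2})$ with $U$ chosen so that $\Ered=\tfrac12p_{2}^{2}+U(q_{2})$ has a critical value at $E$. Taking $U=-\tfrac12q_{2}^{2}$ and $E=0$ gives a saddle of $\Ered$ at the origin with $D^{2}_{pp}H=I\succ0$, so (R) fails.

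Finally, strict convexity does not imply (N): I take $\hb(p_{1})=p_{1}^{4}+W(b)$, e.g.\ $H=p_{1}^{4}+\tfrac12p_{2}^{2}+\tfrac12q_{2}^{2}$. It is strictly convex (a strictly convex function may have isolated zeros of the second derivative) and has the unique critical point $p_{1}=0$ with a sign change of $\hb'=4p_{1}^{3}$, so (U) holds. But $\hb''(0)=0$, violating (N) at every critical point lying on $\Ssec$, i.e.\ over $\{\Ered=E\}$. For the positive part I will note that strict convexity makes $\hb'$ strictly increasing (for $C^{1}$ functions), which gives uniqueness of the critical point and the sign pattern once a minimum exists, as above.

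The main obstacle is the existence issue in (U). I will resolve it by stating explicitly that the sign change requires attainment of the infimum, and by flagging this wording in the proof.
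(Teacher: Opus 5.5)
Your proposal is correct and has the same skeleton as the paper's proof: the positive $(1,1)$ entry of $D^{2}_{pp}H$, citation of the downstream results, one counterexample for each negative claim, and $p_{1}^{4}+W$ for the last sentence. It departs from the paper at two points, and both changes are in your favour.

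\textbf{The (U) claim.} The paper's proof asserts directly that $\p^{2}_{p_{1}p_{1}}H>0$ makes each $\hb$ ``a single strict well (U)''. Since (U) includes the \emph{existence} of $m(b)$, the paper's own Counterexample~\ref{cex:coer} contradicts this: $e^{p_{1}}+W$ has $\hb''>0$ and no critical point. The same example shows that ordinary strict convexity does not give (U) either. Your qualification is exactly what is needed: uniqueness comes from strict monotonicity of $\hb'$, and existence from attainment of the infimum, e.g.\ under (C). Write it that way rather than reinterpreting (U) as ``at most one critical point'', because Proposition~\ref{prop:mult} uses the existence of $m(b)$.

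\textbf{The (R) claim.} The paper cites Counterexample~\ref{cex:hat}, but there $\p^{2}_{p_{2}p_{2}}H=4(q_{2}^{2}+p_{2}^{2}-1)+8p_{2}^{2}$, which equals $-4$ at the origin. So $D^{2}_{pp}H$ is indefinite, and that example only separates fibrewise convexity from (R). Your saddle $\tfrac12p_{1}^{2}+\tfrac12p_{2}^{2}-\tfrac12q_{2}^{2}$ at $E=0$ has $D^{2}_{pp}H=I$ and genuinely proves the claim.

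Your reluctance to adapt Counterexample~\ref{cex:hat} is in fact forced. Under $D^{2}_{pp}H\succ0$ and (C), $\Ered$ is an attained partial minimum of a function that is jointly strictly convex in $(p_{1},p_{2})$. It is therefore strictly convex in $p_{2}$ for fixed $q_{2}$, so it cannot stay $\le E$ near a point where it equals $E$. Hence $\{\Ered=E\}\subseteq\p\Areg$ and $\DE=\p\Areg$ even without (R). Any failure of (R) under full momentum convexity must therefore look like your saddle, not like an interior branch point.

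\textbf{The (C) claim.} Your final example $\sqrt{1+p_{1}^{2}}+p_{1}+\tfrac12p_{2}^{2}$ does the same job as the paper's $e^{p_{1}}+W$. It has the advantage of fixing the $p_{2}$-dependence needed to check the full Hessian. In the write-up, drop the exploratory candidates; one of them, $\sqrt{1+p_{1}^{2}}-\tfrac12p_{1}$, is actually coercive.
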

	
	\begin{proof}
		If $D^{2}_{pp}H\succ0$ then its $(1,1)$ entry $\p^{2}_{p_{1}p_{1}}H>0$, so each
		$\hb$ is strictly convex with a single strict well (U), and the positive fibre
		curvature is (N) with positive sign; the cited results apply. The two negative
		statements about $D^{2}_{pp}H\succ0$ are Counterexamples~\ref{cex:coer} and
		\ref{cex:hat}. That ordinary strict convexity does not force
		$\p^{2}_{p_{1}p_{1}}H\neq0$ is Counterexample~\ref{cex:nd}.
	\end{proof}
	
	\begin{counterexample}[$D^{2}_{pp}H\succ0$ does not give coercivity]
		\label{cex:coer}
		$\hb(p_{1})=e^{p_{1}}+W(b)$ has $\hb''>0$ everywhere, yet
		$\inf_{p_{1}}\hb=W(b)$ is not attained and (C) fails; the fibre never reaches
		energies below $W$ from the left, and Proposition~\ref{prop:mult} does not
		apply.
	\end{counterexample}
	
	\begin{counterexample}[strict convexity does not give nondegeneracy]
		\label{cex:nd}
		$\hb(p_{1})=p_{1}^{4}+W(b)$ is strictly convex with a unique minimum, but
		$\hb''(0)=0$, so (N) fails at the minimum: strict convexity does not imply
		second-order nondegeneracy. Along the boundary the two roots behave as
		$p_{1}\sim\pm(E-W)^{1/4}$, so the projection is not a fold but a more
		degenerate singularity, and Theorem~\ref{thm:fold} does not apply.
	\end{counterexample}
	
	The conclusion is
	\begin{equation}\label{eq:convexbox}
		\boxed{\;\text{momentum convexity is sufficient, not fundamental.}\;}
	\end{equation}
	
	\begin{remark}[why the distinction matters]\label{rem:convex-hierarchy}
		The hierarchy is
		\[
		D^{2}_{pp}H\succ0\;\Longrightarrow\;\p^{2}_{p_{1}p_{1}}H>0\;\Longrightarrow\;
		\textup{(U)}+\textup{(N)},
		\qquad\text{but}\qquad
		\text{strict convexity}\;\not\Longrightarrow\;\p^{2}_{p_{1}p_{1}}H\neq0,
		\]
		and neither condition implies (C) or (R). Writing $D^{2}_{pp}H\succ0$ rather
		than ``convexity'' is therefore not pedantry: it is the difference between a
		condition that supplies (N) and one that does not
		(Counterexample~\ref{cex:nd}). The theory above is stated so that each
		conclusion rests only on the condition it truly needs.
	\end{remark}
	
	\section{Geometric meaning of the fold}\label{sec:geom}
	
	The reconstruction over $b$ is the intersection of the leaf with $\Sigma_E$:
	one seeks the level $E$ on the fibre curve $p_{1}\mapsto\hb(p_{1})$. Under (C)
	and (U) the curve is a single well, and the horizontal line at height $E$ meets
	it in the three regimes of Proposition~\ref{prop:mult}:
	\begin{center}
		$\Ered(b)<E$: two lifts \qquad
		$\Ered(b)=E$: tangency (fold) \qquad
		$\Ered(b)>E$: no real lift.
	\end{center}
	These are the two sheets of $\pi_E$, their coalescence along $\DE$, and the
	exterior, respectively (Figure~\ref{fig:regimes}).
	
	\begin{figure}[t]
		\centering
		\begin{tikzpicture}[>={Stealth[length=4.5pt]}]
			\definecolor{fibreblue}{RGB}{31,78,121}
			\definecolor{energyred}{RGB}{180,50,40}
			\definecolor{fillgreen}{RGB}{235,245,240}
			\definecolor{pointgold}{RGB}{190,130,0}
			\definecolor{textgray}{RGB}{100,100,100}
			\pgfmathsetmacro{\pw}{4.2}\pgfmathsetmacro{\ph}{3.4}
			\pgfmathsetmacro{\gap}{0.9}\pgfmathsetmacro{\sep}{\pw+\gap}
			\begin{scope}
				\pgfmathsetmacro{\xl}{2.2-sqrt((2.0-1.0)/0.30)}
				\pgfmathsetmacro{\xr}{2.2+sqrt((2.0-1.0)/0.30)}
				\fill[fillgreen] plot[domain=\xl:\xr,samples=40]
				(\x,{0.30*(\x-2.2)*(\x-2.2)+1.0}) -- (\xr,2.0) -- (\xl,2.0) -- cycle;
				\draw[->,gray!40,thick] (-0.2,0)--(\pw,0) node[right,black]{$p_1$};
				\draw[->,gray!40,thick] (0,-0.2)--(0,\ph) node[above,black]{$\hb$};
				\draw[energyred,thick,densely dashed] (-0.2,2.0)--({\pw-0.2},2.0);
				\node[energyred,font=\small] at (0.55,2.2){$E$};
				\draw[fibreblue,very thick,domain=\xl:\xr,samples=60]
				plot (\x,{0.30*(\x-2.2)*(\x-2.2)+1.0});
				\filldraw[energyred] (\xl,2.0) circle(1.6pt);
				\filldraw[energyred] (\xr,2.0) circle(1.6pt);
				\node[font=\footnotesize,below] at (\xl,0){$p_1^-$};
				\node[font=\footnotesize,below] at (\xr,0){$p_1^+$};
				\draw[black!65,dotted] (\xl,0)--(\xl,2.0);
				\draw[black!65,dotted] (\xr,0)--(\xr,2.0);
				\filldraw[pointgold] (2.2,1.0) circle(1.6pt);
				\node[above,font=\small\bfseries] at (0.5*\pw,\ph){$\Ered(b)<E$};
				\node[above,font=\footnotesize,textgray] at (0.5*\pw,{\ph-0.42})
				{two lifts};
			\end{scope}
			\begin{scope}[xshift=\sep cm]
				\draw[->,gray!40,thick] (-0.2,0)--(\pw,0) node[right,black]{$p_1$};
				\draw[->,gray!40,thick] (0,-0.2)--(0,\ph) node[above,black]{$\hb$};
				\draw[energyred,thick,densely dashed] (-0.2,2.0)--({\pw-0.2},2.0);
				\node[energyred,font=\small] at (0.55,2.2){$E$};
				\draw[fibreblue,very thick,domain=0.25:{\pw-0.2},samples=60]
				plot (\x,{0.30*(\x-2.2)*(\x-2.2)+2.0});
				\filldraw[energyred] (2.2,2.0) circle(1.6pt);
				\node[font=\footnotesize,below] at (2.2,0){$p_1^{c}$};
				\draw[black!65,dotted] (2.2,0)--(2.2,2.0);
				\node[above,font=\small\bfseries] at (0.5*\pw,\ph){$\Ered(b)=E$};
				\node[above,font=\footnotesize,textgray] at (0.5*\pw,{\ph-0.42})
				{tangency: fold};
			\end{scope}
			\begin{scope}[xshift=2*\sep cm]
				\draw[->,gray!40,thick] (-0.2,0)--(\pw,0) node[right,black]{$p_1$};
				\draw[->,gray!40,thick] (0,-0.2)--(0,\ph) node[above,black]{$\hb$};
				\draw[energyred,thick,densely dashed] (-0.2,2.0)--({\pw-0.2},2.0);
				\node[energyred,font=\small] at (0.55,2.2){$E$};
				\draw[fibreblue,very thick,domain=0.25:{\pw-0.2},samples=60]
				plot (\x,{0.30*(\x-2.2)*(\x-2.2)+2.6});
				\filldraw[pointgold] (2.2,2.6) circle(1.6pt);
				\draw[decorate,decoration={brace,amplitude=3pt,mirror},textgray]
				(2.35,2.6)--(2.35,2.0)
				node[midway,right=3pt,font=\footnotesize]{$\delta$};
				\node[above,font=\small\bfseries] at (0.5*\pw,\ph){$\Ered(b)>E$};
				\node[above,font=\footnotesize,textgray] at (0.5*\pw,{\ph-0.42})
				{no real lift};
			\end{scope}
		\end{tikzpicture}
		\caption{The three regimes of the reconstruction over a base point $b$.
			The solid curve is the fibre function $\hb$; the dashed line is the energy
			$E$. The two intersection points are the sheets of $\pi_E$; they coalesce
			at the fold on $\DE$ and disappear beyond it, where the energy deficit
			$\delta(b)$ measures the gap. Under (C) and (U) the curve is a single well.}
		\label{fig:regimes}
	\end{figure}
	
	\begin{remark}[discriminant, not caustic]\label{rem:disc-not-caustic}
		Under (N) the least-energy function is smooth (Lemma~\ref{lem:branch}), so the
		graph of $d\Ered$ is a smooth Lagrangian submanifold of $T^{*}\Cred$ projecting
		diffeomorphically to $\Cred$: it carries \emph{no} caustic. The singularity of
		the reconstruction lives in $\pi_E$, not in $\Ered$. Its discriminant $\DE$
		(the locus where \eqref{eq:constraint} acquires a double root, equivalently
		where the leaf is tangent to $\Sigma_E$) is therefore a discriminant, not a
		Lagrangian caustic, and the two belong to different maps. Under (R) the
		discriminant is the Hill boundary $\p\Areg$ (Theorem~\ref{thm:boundary});
		without (R) it may in addition contain interior branch points.
	\end{remark}
	
	\section{Variational resolution of the reconstruction}\label{sec:variational}
	
	Only now, with the geometry of $\pi_E$ established, does the squared residual
	enter. It provides an analytic \emph{reformulation} of the reconstruction: the
	fold remains in the geometry of $\pi_E$, and the residual changes how the
	computational problem is represented, not whether the underlying singularity is
	present.
	
	\begin{definition}\label{def:F}
		For $b\in\Cred$ set $\Fb(p_{1})=\bigl(\hb(p_{1})-E\bigr)^{2}$.
	\end{definition}
	
	\begin{proposition}[existence and admissibility]\label{prop:F-exist}
		Assume (C). Then $\Fb$ is coercive and attains a global minimum, and
		$\min_{p_{1}}\Fb=0\iff b\in\Areg$. Under (C) and (U):
		\begin{enumerate}[label=\textup{(\alph*)},leftmargin=2.4em]
			\item if $\Ered(b)<E$, then $\Fb$ is a double well with exactly two global
			minimizers, the two roots $p_{1}^{\pm}$ of \eqref{eq:constraint}, separated
			by a local maximum at $m(b)$; in particular $\arg\min\Fb$ is a two-point
			set and $\Fb$ is \emph{not} convex;
			\item if $\Ered(b)=E$, the unique minimizer is $m(b)$ (a double root);
			\item if $\Ered(b)>E$, the unique minimizer is $m(b)$, with
			$\min\Fb=(\Ered(b)-E)^{2}$.
		\end{enumerate}
	\end{proposition}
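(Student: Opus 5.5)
We first establish the unconditional part under (C) alone, and then read the three cases off the shape of $\hb$ supplied by (U) and Proposition~\ref{prop:mult}.

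\emph{Coercivity and attainment.} Since $\hb\to+\infty$ as $|p_1|\to\infty$, we have $\Fb=(\hb-E)^2\to+\infty$ as well. The function $\Fb$ is continuous, so it has compact sublevel sets and attains a global minimum. Because $\Fb\ge0$, its minimum is $0$ iff $\hb(p_1)=E$ for some $p_1$, i.e.\ iff $b\in\Areg$ (Definition~\ref{def:adm}).

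\emph{The three cases under (C) and (U).} For (a), assume $\Ered(b)<E$. Proposition~\ref{prop:mult} gives exactly two roots $p_1^-<m(b)<p_1^+$, and these are precisely the zeros of $\Fb$, hence its global minimizers.

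To see the local maximum at $m(b)$, note that on $[p_1^-,p_1^+]$ the function $\hb-E\le0$ decreases strictly to $\Ered(b)-E<0$ at $m$ and then increases. So $\Fb=(E-\hb)^2$ increases strictly on $[p_1^-,m]$ and decreases strictly on $[m,p_1^+]$, and $m$ is a strict local maximum. If $\hb\in C^1$, the same follows from $\Fb'=2(\hb-E)\hb'$ and a sign analysis on the four intervals cut out by $p_1^-,m,p_1^+$.

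Non-convexity is immediate: a convex function has a convex set of minimizers, but $\{p_1^-,p_1^+\}$ is not convex. Alternatively, $\Fb(m)>0=\Fb(p_1^\pm)$ directly violates the convexity inequality at the midpoint-type combination giving $m$.

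For (b), assume $\Ered(b)=E$. Proposition~\ref{prop:mult} says $m$ is the only zero of $\Fb$, so it is the unique minimizer. It is a double root in the sense that $\hb'(m)=0$.

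For (c), assume $\Ered(b)>E$. Then $\hb-E\ge\Ered(b)-E>0$ everywhere, so $\Fb=(\hb-E)^2$ is minimized exactly where $\hb$ is minimized. By (U) this happens uniquely at $m$, with value $(\Ered(b)-E)^2$.

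\emph{Main obstacle.} The only delicate point is the regularity needed in (a). Proposition~\ref{prop:mult} assumes only $H\in C^0$, whereas (U) as stated includes $\hb\in C^1$. We therefore phrase the double-well claim through the strict monotonicity of $\hb$ on $(-\infty,m)$ and $(m,\infty)$, which (U) gives via the sign of $\hb'$, rather than through $\Fb''$. This avoids any nondegeneracy assumption, consistent with Remark~\ref{rem:mult-hyp}.
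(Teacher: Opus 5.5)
Your proposal is correct and follows essentially the same route as the paper: coercivity of $\Fb$ inherited from (C), the zero set of $\Fb$ identified with the roots counted in Proposition~\ref{prop:mult}, and the sign and monotonicity structure of $\hb$ from (U) giving the double well with a local maximum at $m(b)$ and cases (b) and (c). You add details the paper leaves implicit, namely the strict monotonicity of $\Fb$ on $[p_1^-,m]$ and $[m,p_1^+]$ and non-convexity via the non-convex minimizer set $\{p_1^-,p_1^+\}$.
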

	
	\begin{proof}
		Coercivity of $\hb$ (C) gives $\Fb\to+\infty$, so the continuous $\Fb$ attains
		a global minimum; $\min\Fb=0$ iff $\hb=E$ has a root, i.e.\ $b\in\Areg$. Under
		(U), $\hb-E$ is negative on $(p_{1}^{-},p_{1}^{+})$ and positive outside when
		$\Ered(b)<E$, so $\Fb=(\hb-E)^{2}$ vanishes exactly at $p_{1}^{\pm}$ and has an
		interior local maximum $(\Ered(b)-E)^{2}$ at $m$: a double well, (a). Cases
		(b),(c) are immediate from Proposition~\ref{prop:mult}.
	\end{proof}
	
	\begin{remark}[branch selection is a separate rule]\label{rem:branch-rule}
		Inside $\Areg$ the minimization does not single out a lift: $\arg\min\Fb=
		\{p_{1}^{-},p_{1}^{+}\}$. Selecting a sheet of $\pi_E$ requires an extra rule.
		The forward-flow convention chooses the root with $\p_{p_{1}}H>0$, i.e.\ the
		leaf crossing $\Sigma_E$ in the direction of increasing energy; this is a
		choice, compatible with forward Hamiltonian flow, not a consequence of
		minimizing $\Fb$. Along the fold the two local sheets coalesce and are
		exchanged under continuation across $\DE$: on the admissible side the fibre
		$\{v=\pm\sqrt{-2s/\p^{2}_{p_{1}p_{1}}H}\}$ of Theorem~\ref{thm:fold} has two
		points, which merge to one on $\DE$, so no continuous single-valued sheet
		extends across it.
	\end{remark}
	
	\begin{corollary}[boundary flatness]\label{cor:quartic}
		Under (N) at a boundary point, $\hb(p_{1})-E=\tfrac12\hb''(p_{1}^{c})
		(p_{1}-p_{1}^{c})^{2}+O(\,\cdot\,^{3})$ and hence
		\[
		\Fb(p_{1})=\tfrac14\hb''(p_{1}^{c})^{2}(p_{1}-p_{1}^{c})^{4}+O(\,\cdot\,^{5}),
		\]
		so $\Fb$ is quartically flat at the fold. The flatness is governed by the
		fibre curvature $\hb''(p_{1}^{c})$ alone, not by the vanishing $\hb'$: this is
		the analytic image of the fold of Theorem~\ref{thm:fold}.
	\end{corollary}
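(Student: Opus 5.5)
The plan is to reduce everything to a Taylor expansion of the fibre function about the critical point $p_{1}^{c}=p_{1}^{c}(b)$ and then square it. Let $b$ be a boundary point. By Proposition~\ref{prop:mult}, $b$ satisfies $\Ered(b)=E$, and under (U) the unique fibrewise critical point is the minimum $m(b)=p_{1}^{c}$. This gives $\hb(p_{1}^{c})=E$ and $\hb'(p_{1}^{c})=0$. By (N), $\hb''(p_{1}^{c})\neq0$; when the point is the minimum it is in fact positive, although the sign plays no role once we square. We take $H\in C^{3}$ near the point; this is the regularity needed for a cubic remainder, and it is available since Theorem~\ref{thm:fold} already assumes smoothness. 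Taylor's theorem with remainder at $p_{1}^{c}$ then gives
\[
\hb(p_{1})-E=\tfrac12\hb''(p_{1}^{c})\,(p_{1}-p_{1}^{c})^{2}+R_{3}(p_{1}),\qquad |R_{3}(p_{1})|\le K|p_{1}-p_{1}^{c}|^{3}
\]
on a neighbourhood of $p_{1}^{c}$. This is exactly the expansion of Remark~\ref{rem:taylor} at the fold, where the constant term vanishes because $E-\Ered^{c}(b)=0$.

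Next we square the expansion. Write $a=\tfrac12\hb''(p_{1}^{c})$ and $t=p_{1}-p_{1}^{c}$. Then
\[
\Fb=(at^{2}+R_{3})^{2}=a^{2}t^{4}+2at^{2}R_{3}+R_{3}^{2}.
\]
The cross term is bounded by $2|a|K|t|^{5}$ and the last term by $K^{2}t^{6}$. Both are therefore $O(|t|^{5})$ as $t\to0$, and the leading coefficient is $a^{2}=\tfrac14\hb''(p_{1}^{c})^{2}$. This is the displayed formula. Quartic flatness then means that $\Fb$ and its first three derivatives vanish at $p_{1}^{c}$, while $\Fb^{(4)}(p_{1}^{c})=6\hb''(p_{1}^{c})^{2}\neq0$. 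We read this off either from the expansion under $C^{4}$ regularity, or directly by differentiating $\Fb=(\hb-E)^{2}$ with the Leibniz rule, using $\hb-E=\hb'=0$ at $p_{1}^{c}$.

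For the interpretive clause we note which quantities survive in the computation. The values $\hb-E$ and $\hb'$ vanish at the fold, so the first surviving coefficient is $\tfrac14(\hb'')^{2}$. The order of flatness and its constant are therefore fixed by the fibre curvature alone. We would also point out that this is the same $\hb''(p_{1}^{c})$ that fixes the fold normal form in Theorem~\ref{thm:fold}, where $s=-\tfrac12\hb''v^{2}$. Substituting the fold coordinates gives $\Fb=(\tfrac12\hb''v^{2}+s)^{2}$, which at $s=0$ is $\tfrac14(\hb'')^{2}v^{4}$. This realizes the corollary as the analytic image of the fold.

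The only delicate step is the remainder bookkeeping. If one insists only on $H\in C^{2}$, the remainder is merely $o(t^{2})$ and the error term becomes $o(t^{4})$ instead of $O(t^{5})$. I would therefore state the $C^{3}$ assumption explicitly, noting that it is implied by the smoothness already assumed in Theorem~\ref{thm:fold}. With that assumption in place, the rest is routine.
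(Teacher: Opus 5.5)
Your proof is correct and is essentially the paper's own argument. The paper justifies the corollary inline by Taylor-expanding $\hb$ about $p_{1}^{c}$, where $\hb-E$ and $\hb'$ both vanish, and squaring; what you add are refinements: the explicit $C^{3}$ regularity behind the $O(\,\cdot\,^{5})$ remainder (with the correct observation that $C^{2}$ alone gives only $o(\,\cdot\,^{4})$), and the check $\Fb^{(4)}(p_{1}^{c})=6\hb''(p_{1}^{c})^{2}$. Your appeal to (C), (U) and Proposition~\ref{prop:mult} is needed only to place a point of $\p\Areg$ on $\DE$; the expansion itself uses only $\hb(p_{1}^{c})=E$ and $\hb'(p_{1}^{c})=0$, which hold at every point of $\Crit(\pi_E)$ by Definition~\ref{def:disc}.
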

	
	\begin{corollary}[quantitative fingerprint of the fold]\label{cor:fingerprint}
		Fix an interior base point $b$ near the discriminant and set $\Delta=E-\Ered(b)>0$.
		Under (C), (U) and (N) at the fibrewise minimum, the two lifts and the residual
		satisfy, as $\Delta\to0^{+}$,
		\[
		\begin{aligned}
			p_{1}^{\pm}-p_{1}^{c}
			&=\pm\sqrt{\frac{2\Delta}{\hb''(p_{1}^{c})}}+O(\Delta),
			&
			\hb'(p_{1}^{\pm})
			&=\pm\sqrt{2\,\hb''(p_{1}^{c})\,\Delta}+O(\Delta),
			\\[2mm]
			\Fb''(p_{1}^{\pm})
			&=2\,\hb'(p_{1}^{\pm})^{2}
			=4\hb''(p_{1}^{c})\,\Delta+O(\Delta^{3/2}).
		\end{aligned}
		\]
		Thus the root separation scales as $\Delta^{1/2}$ while the residual curvature
		at either minimizer scales as $\Delta$.
	\end{corollary}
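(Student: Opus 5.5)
The plan is a one-variable Taylor expansion of $\hb$ about its minimum $p_{1}^{c}=m(b)$, followed by inversion of the resulting quadratic relation. Under (C), (U) and (N) at the fibrewise minimum, Lemma~\ref{lem:branch} makes $p_{1}^{c}$ and $\Ered=\hb(p_{1}^{c})$ depend smoothly on $b$. I write $a:=\hb''(p_{1}^{c})>0$; it is positive because the critical point is a nondegenerate minimum. With $t=p_{1}-p_{1}^{c}$, Taylor's theorem with $H\in C^{3}$ near the fold gives
\[
\hb(p_{1})-E=-\Delta+\tfrac12 a\,t^{2}+O(t^{3}),
\]
and the constant in $O(t^{3})$ is uniform for $b$ in a compact neighbourhood of a discriminant point. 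By Proposition~\ref{prop:mult} the two roots $p_{1}^{\pm}$ exist, one on each side of $p_{1}^{c}$.

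The first step is the root asymptotics. I will not invert perturbatively. Instead I apply the parametrized Morse lemma already used in the proof of Theorem~\ref{thm:fold}: it gives a coordinate $v=t\,(1+O(t))$, smooth in $(b,t)$, with $\hb-E=\tfrac12 a\,v^{2}-\Delta$ exactly. The roots are then $v_{\pm}=\pm\sqrt{2\Delta/a}$. Pulling back through $t=v+O(v^{2})$ yields
\[
t_{\pm}=\pm\sqrt{2\Delta/a}+O(\Delta).
\]
The only point to check is that the Morse-lemma coordinate change is $C^{1}$ with uniform bounds in $b$. I expect this bookkeeping of uniformity, rather than any estimate, to be the main obstacle, and it is the reason for assuming $C^{3}$ rather than $C^{2}$. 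As a fallback, I would substitute $t=s\sqrt{\Delta}$ and apply the implicit function theorem to $G(s,\sqrt\Delta)=\tfrac12 a s^{2}-1+O(s^{3}\sqrt\Delta)$ at $s=\pm\sqrt{2/a}$. There $\partial_{s}G=\pm a\sqrt{2/a}\neq0$, so $s$ is $C^{1}$ in $\sqrt\Delta$, which again gives $t_{\pm}=\pm\sqrt{2\Delta/a}+O(\Delta)$.

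The second step is the slope. Differentiating the expansion gives $\hb'(p_{1})=a\,t+O(t^{2})$. Substituting $t_{\pm}$ and using $t_{\pm}^{2}=O(\Delta)$ yields
\[
\hb'(p_{1}^{\pm})=\pm a\sqrt{2\Delta/a}+O(\Delta)=\pm\sqrt{2a\Delta}+O(\Delta).
\]

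The third step is the residual curvature. Since $\Fb=(\hb-E)^{2}$, we have
\[
\Fb''=2(\hb')^{2}+2(\hb-E)\hb''.
\]
The second term vanishes identically at $p_{1}^{\pm}$ because these are roots, which gives the exact identity $\Fb''(p_{1}^{\pm})=2\hb'(p_{1}^{\pm})^{2}$. Squaring the slope expansion gives $2a\Delta+O(\Delta^{3/2})$ from the cross term, and twice this is $4a\Delta+O(\Delta^{3/2})$.

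The concluding scaling statement is then read off directly. The root separation is of order $\Delta^{1/2}$, while the residual curvature at either minimizer is of order $\Delta$.
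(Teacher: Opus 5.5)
Your proposal is correct and follows the paper's proof essentially step for step: a Taylor expansion of $\hb$ about the nondegenerate minimum, then solving $\hb=E$ for the roots, then differentiating for the slope, and finally the exact identity $\Fb''=2(\hb')^{2}+2(\hb-E)\hb''$ evaluated at the roots. Where you differ is only in rigor: you justify the root inversion (via the Morse coordinate or the rescaling $t=s\sqrt{\Delta}$), and you state explicitly the $C^{3}$ regularity and the uniformity in $b$ that the paper's $O((p_{1}-p_{1}^{c})^{3})$ remainder tacitly requires.
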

	
	\begin{proof}
		Since $p_{1}^{c}$ is the nondegenerate minimum, $\hb'(p_{1}^{c})=0$ and
		$\hb''(p_{1}^{c})>0$, so $\hb(p_{1})-E=-\Delta+\tfrac12\hb''(p_{1}^{c})
		(p_{1}-p_{1}^{c})^{2}+O((p_{1}-p_{1}^{c})^{3})$; solving $\hb=E$ gives the first
		expansion, and differentiating $\hb$ gives the second. For the third, from
		$\Fb'=2(\hb-E)\hb'$ one has $\Fb''=2(\hb')^{2}+2(\hb-E)\hb''$, and at a root
		$\hb(p_{1}^{\pm})=E$ the second term vanishes, leaving
		$\Fb''(p_{1}^{\pm})=2\hb'(p_{1}^{\pm})^{2}$; substitute the second expansion.
	\end{proof}
	
	\begin{remark}\label{rem:fingerprint}
		Corollary~\ref{cor:fingerprint} is a geometric/analytic consequence of the
		fold, not a statement about any algorithm: the same $\Delta^{1/2}$ separation
		that makes the two roots of \eqref{eq:constraint} coalesce is what flattens the
		residual to curvature $O(\Delta)$ at its minimizers.
	\end{remark}
	
	\begin{definition}[energy deficit]\label{def:deficit}
		$\delta(b):=\sqrt{\min_{p_{1}}\Fb(p_{1})}$.
	\end{definition}
	
	\begin{corollary}\label{cor:deficit}
		Under (C) and (U), $\delta(b)=\bigl(\Ered(b)-E\bigr)_{+}=\max\{\Ered(b)-E,0\}$:
		it vanishes exactly on $\Areg$ and measures, outside $\Areg$, the energy by
		which the leaf's least value exceeds $E$. It is continuous, and near the
		boundary $\delta$ does \emph{not} record on which side a nearby admissible
		point lies; that information is carried by the two sheets, not by the scalar
		$\delta$.
	\end{corollary}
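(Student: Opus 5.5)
The plan is to compute $\min_{p_{1}}\Fb$ directly from Proposition~\ref{prop:F-exist} and then take the square root. Under (C), $\Fb$ attains its minimum, so $\delta(b)$ is well defined and finite.

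First I treat $b\in\Areg$, i.e.\ $\Ered(b)\le E$. Proposition~\ref{prop:F-exist} gives $\min\Fb=0$, so $\delta(b)=0=(\Ered(b)-E)_{+}$. Next I treat $\Ered(b)>E$. Case (c) of Proposition~\ref{prop:F-exist} gives $\min\Fb=(\Ered(b)-E)^{2}$. Taking the nonnegative root yields $\delta(b)=\Ered(b)-E=(\Ered(b)-E)_{+}$.

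Should one want this independently of that case, note that $\hb\ge\Ered(b)>E$ everywhere. Hence $|\hb-E|=\hb-E$, and its minimum is attained at $m(b)$. Combining the two cases gives the formula. It also shows that $\delta$ vanishes exactly on $\{\Ered\le E\}=\Areg$ (Proposition~\ref{prop:mult}), and that it equals the excess of the leaf's least energy over $E$ outside $\Areg$.

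Continuity follows because $\Ered$ is continuous by Proposition~\ref{prop:mult}, and $t\mapsto\max\{t-E,0\}$ is continuous. The composition $\delta$ is therefore continuous.

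The final assertion is interpretive, and I would make it precise as follows. Take $b_{0}\in\p\Areg$ and consider admissible points $b$ near $b_{0}$. At all of them $\delta(b)=0$, exactly as at $b_{0}$ itself. So $\delta$ is identically zero on the admissible side and cannot distinguish an interior point from a boundary point. It also carries no information about the two lifts $p_{1}^{\pm}$ or about which sheet one is on. That information lives in the fibre $\pi_E^{-1}(b)$, which has two points by Proposition~\ref{prop:mult}. This is consistent with Corollary~\ref{cor:silhouette}, since $\delta$ is a function of $\Ered$ alone.

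The only point needing care is to use the nonnegative square root and to rely on the attainment of the minimum from (C). Neither is a real obstacle, so the proof is essentially a short case check.
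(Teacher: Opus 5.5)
Your proposal is correct and follows the paper's proof. The paper also reads $\min\Fb=0$ on $\Areg$ and $\min\Fb=(\Ered-E)^{2}$ outside directly from Proposition~\ref{prop:F-exist} and takes the nonnegative square root; your continuity argument (via the continuity of $\Ered$ from Proposition~\ref{prop:mult}) and your reading of the interpretive final clause spell out what the paper leaves implicit in its one-line proof.
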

	
	\begin{proof}
		Immediate from Proposition~\ref{prop:F-exist}: $\min\Fb=0$ on $\Areg$ and
		$(\Ered-E)^{2}$ outside.
	\end{proof}
	
	The distinction between \emph{mathematical} existence of minimizers (above) and
	the \emph{numerical} behaviour of any particular optimizer is maintained
	throughout the sequel: nothing here asserts that a given algorithm converges to
	$\arg\min\Fb$.
	
	\section{Computational reproducibility}\label{sec:impl}
	
	The theory of this paper is analytic; \textsc{Wolfram Language} enters only
	to make its symbolic claims and its worked examples reproducible, not to
	assert any algorithmic advantage; the use of computer algebra for nonlinear
	and Hamiltonian systems of this kind is standard~\cite{ennsmcguire2001,lowenstein2012}. An accompanying script,
	\texttt{ReconstructionCoisotropicReduction.wl}, is self-contained, runs from
	a clean kernel, and is organized along the architecture of
	Sections~\ref{sec:setting}--\ref{sec:variational}.
	
	The script realizes the discriminant of Definition~\ref{def:disc} as the
	elimination of $p_{1}$ from $\{h_{b}=E,\ \p_{p_{1}}h_{b}=0\}$; the admissible
	region of Definition~\ref{def:adm} as the quantifier elimination of
	$\exists\,p_{1}\,(h_{b}=E)$; and, for the radial family of
	Counterexample~\ref{cex:hat} at $E=1$, it returns $\Areg=\{r\le\sqrt2\}$
	together with a discriminant $\{r=\sqrt2\}\cup\{r=0\}$, so that the interior
	branch point appears as an output rather than an assumption. It checks the
	hypothesis dichotomy of Section~\ref{sec:convex} on the families
	$\tfrac12 p_{1}^{2}+W$, $e^{p_{1}}+W$ and $p_{1}^{4}+W$, identifying which of
	(C), (N) fails in each; it reproduces the fold expansion of
	Corollary~\ref{cor:quartic} and the scalings of
	Corollary~\ref{cor:fingerprint} by local series; it illustrates the three
	regimes of Proposition~\ref{prop:F-exist} and the deficit identity of
	Corollary~\ref{cor:deficit}; and it carries out the reconstruction, the
	forward-flow branch selection of Remark~\ref{rem:branch-rule}, and the
	section construction for the H\'enon--Heiles and swing--spring systems of
	Section~\ref{sec:applications}. A final routine returns a compact record of
	which checks were executed. The script uses symbolic minimization structure
	only to \emph{illustrate} the analytic statements of
	Section~\ref{sec:variational}, which it does not purport to prove by example.
	
	An interactive point-and-click explorer for two-degree-of-freedom
	sections is available as the pre-existing tool
	\texttt{ClickPoincarePlot2D} \cite{chanlopez2024wolfram}. The theory here
	is independent of this tool, and the reproducibility of the present
	results rests on the accompanying script.
	
	\section{Applications}\label{sec:applications}
	
	\subsection{H\'enon--Heiles}\label{ssec:hh}
	For the H\'enon--Heiles system~\cite{henon1964,ennsmcguire2001},
	\[
	H=\tfrac12(p_{x}^{2}+p_{y}^{2})+\tfrac12(x^{2}+y^{2})+x^{2}y-\tfrac13 y^{3},
	\]
	on $\{x=0\}$ the missing momentum is $p_{x}$ and the fibre function is
	$\hb(p_{x})=\tfrac12 p_{x}^{2}+U(y,p_{y})$ with
	$U=\tfrac12 p_{y}^{2}+\tfrac12 y^{2}-\tfrac13 y^{3}$, so
	$\Ered(y,p_{y})=U(y,p_{y})$ and $\Areg=\{U\le E\}$. Here $\DE=\{U=E\}$, and at
	an energy $E$ that is a regular value of $U$ this coincides with the Hill
	boundary $\p\Areg$ (Theorem~\ref{thm:boundary}); along it $\hb''=1\neq0$ and,
	where $dU\neq0$, $\pi_E$ is a fold (Theorem~\ref{thm:fold}). The critical
	values of $U$ are the candidate energies of Remark~\ref{rem:sard} at which (R)
	fails and the accessible region can reorganize as $E\to E_{\mathrm{esc}}$. The
	reconstruction, deficit and branch selection are carried out for this system in
	the reproducibility script of Section~\ref{sec:impl}.
	Figure~\ref{fig:hh-poincare} shows the resulting section on $\{x=0\}$ in the
	reduced coordinates $(y,p_{y})$, with the reconstructed momentum selected by the
	forward-flow branch $p_{x}>0$.
	
	\begin{figure}[t]
		\centering
		\includegraphics[width=1.1\linewidth]{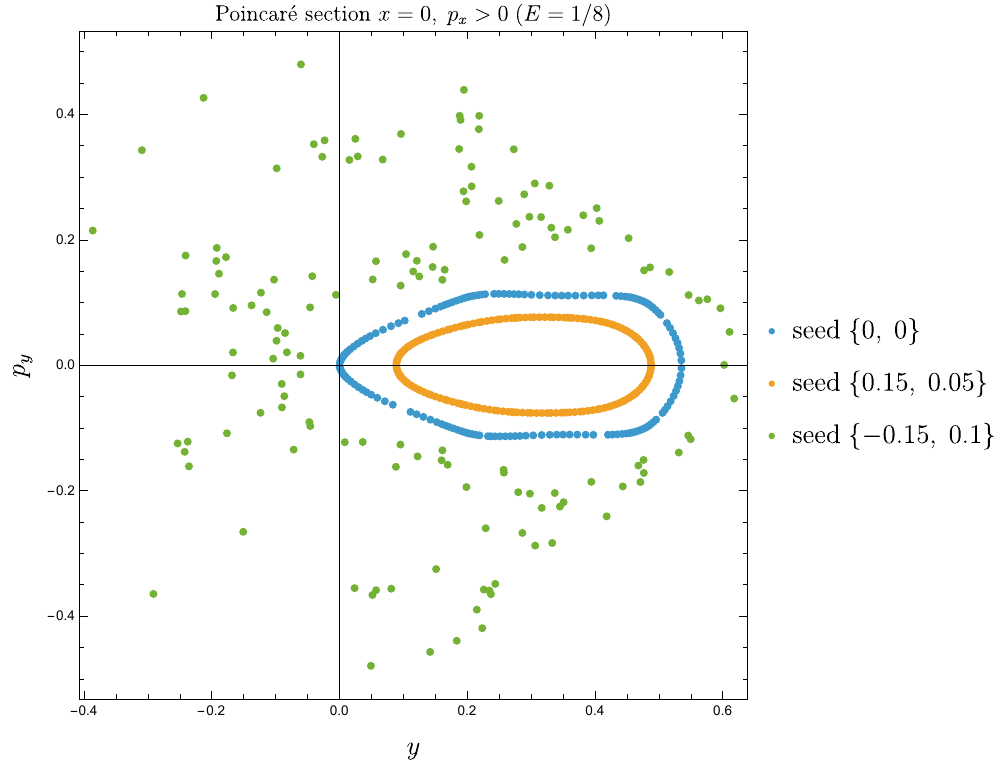}
		\caption{Poincar\'e section of the H\'enon--Heiles system on $\{x=0\}$ at
			$E=1/8$, with the reconstructed momentum selected by the forward-flow branch
			$p_{x}>0$, shown in the reduced coordinates $(y,p_{y})$.}
		\label{fig:hh-poincare}
	\end{figure}
	
	\subsection{Swing--spring}\label{ssec:ss}
	In nondimensional coordinates $(x,z,p_{x},p_{z})$, the swing--spring
	Hamiltonian~\cite{lowenstein2012} is
	\[
	H=\tfrac12(p_{x}^{2}+p_{z}^{2})+z+2\!\left(\tfrac34-\sqrt{x^{2}+(z-1)^{2}}
	\right)^{2}-\tfrac18 .
	\]
	The kinetic term is a positive-definite quadratic form, so $D^{2}_{pp}H\succ0$
	and hence (U) and (N) hold on every leaf (Proposition~\ref{prop:convex}); (C)
	holds since $\hb(p_{x})=\tfrac12 p_{x}^{2}+U(z,p_{z})\to\infty$. On the section
	$\{x=0\}$
	the reconstructed variable is $p_{x}$, and the procedure applies as before:
	$\Ered=U|_{x=0}$, $\DE=\{\Ered=E\}$, and, at a regular value of $\Ered$, the
	fold governs the boundary. This second system, with a configuration-dependent
	potential unlike that of H\'enon--Heiles, illustrates that the construction is
	not tied to a particular Hamiltonian; the narrow admissible regions in
	$(z,p_{z})$ are the fold neighbourhoods where the two sheets nearly coincide.
	
	\section{Conclusion}\label{sec:conclusion}
	
	The missing momentum on a Poincar\'e section is governed by the geometry of a
	reconstruction correspondence associated with a coisotropic reduction:
	\[
	\boxed{
		\begin{gathered}
			\text{coisotropic reduction}
			\Rightarrow
			\text{reconstruction correspondence}
			\Rightarrow
			\text{branched covering}\\
			\Rightarrow
			\text{discriminant}
			\Rightarrow
			\text{fold}
			\Rightarrow
			\text{variational resolution}
		\end{gathered}
	}
	\]
	Here ``branched covering'' abbreviates the precise structure established
	above: under coercivity, $\pi_E$ is a covering of the interior away from the
	discriminant, and under nondegeneracy and transversality it branches by a
	fold along the discriminant; coercivity alone does not produce a global smooth
	branched covering.
	Under the base-side regularity condition (R) the Hill boundary is the
	discriminant $\DE$ of $\pi_E$, not a caustic; without (R) the discriminant may
	contain interior branch points (Theorem~\ref{thm:boundary},
	Counterexample~\ref{cex:hat}). The numerical degeneration of root-finding
	reflects the fold of Theorem~\ref{thm:fold} under its nondegeneracy and
	transversality hypotheses; when nondegeneracy fails, as for $p_{1}^{4}+W$, the
	projection is more degenerate than a fold (Counterexample~\ref{cex:nd}). The
	least-energy function determines the
	admissible silhouette but not the lifted geometry
	(Corollary~\ref{cor:silhouette}); and the squared residual $\Fb$ is the
	analytic resolution of the fold, with two global minimizers inside $\Areg$ and
	a separate branch-selection rule. A positive-definite momentum Hessian enters
	only as a sufficient bundle of the fibrewise hypotheses \eqref{eq:convexbox},
	supplying (U) and (N) but neither (C) nor (R). The chain (geometry, structural
	theorem, variational interpretation, reproducible computation) is realized
	symbolically in the accompanying script, which verifies the discriminant, the
	admissible region, the interior-branch-point counterexample, the fold expansion
	and its scalings, and the two mechanical examples.
	
	Three extensions are natural. First, higher degrees of freedom: the section
	becomes higher-dimensional, several momenta are reconstructed at once, and the
	generic singularities of $\pi_E$ beyond the fold (cusps, swallowtails) should
	be classified by the regularity of $H$ along the leaves. Second, the
	bifurcation energies of Remark~\ref{rem:sard}, where (R) fails, are the
	candidate energies for topological transitions of $\Areg$ with $E$ and deserve
	a dedicated study.
	Third, non-compact energy levels above escape thresholds require replacing the
	recurrent Poincar\'e map by exit maps, while the reconstruction correspondence
	itself remains well defined.
	
	\section*{Acknowledgements}
	E.~Chan-L\'opez acknowledges support from SECIHTI through the ``Estancias
	Posdoctorales por M\'exico'' program (CVU 422090).
	
	\section*{Ethics declarations}
	
	\subsection*{Conflict of interest}
	The author declares no conflict of interest.
	
	\subsection*{Ethical approval}
	Not applicable.
	


\begin{thebibliography}{20}
		
		\bibitem{abraham1978}
		R.~Abraham and J.\,E.~Marsden,
		\emph{Foundations of Mechanics}, 2nd ed.,
		Benjamin/Cummings, Reading, MA, 1978.
		
		\bibitem{arnold}
		V.\,I.~Arnold,
		\emph{Mathematical Methods of Classical Mechanics}, 2nd ed.,
		Graduate Texts in Mathematics, vol.~60, Springer, New York, 1989.
		\newblock \doi{10.1007/978-1-4757-2063-1}
		
		\bibitem{bolsinov1996}
		A.~Bolsinov, H.\,R.~Dullin, and A.~Wittek,
		Topology of energy surfaces and existence of transversal Poincar\'e sections,
		\emph{J.\ Phys.\ A: Math.\ Gen.} \textbf{29} (1996), no.~16, 4977--4985.
		\newblock \doi{10.1088/0305-4470/29/16/014}
		
		\bibitem{chanlopez2024wolfram}
		R.\,E.~Chan-L\'opez,
		\texttt{ClickPoincarePlot2D}, Wolfram Function Repository, 2023.
		\newblock \url{https://resources.wolframcloud.com/FunctionRepository/resources/ClickPoincarePlot2D/}
		
		\bibitem{dullin1995}
		H.\,R.~Dullin and A.~Wittek,
		Complete Poincar\'e sections and tangent sets,
		\emph{J.\ Phys.\ A: Math.\ Gen.} \textbf{28} (1995), no.~24, 7157--7180.
		\newblock \doi{10.1088/0305-4470/28/24/017}
		
		\bibitem{ennsmcguire2001}
		R.\,H.~Enns and G.\,C.~McGuire,
		\emph{Nonlinear Physics with Mathematica for Scientists and Engineers},
		Birkh\"auser, Boston, MA, 2001.
		\newblock \doi{10.1007/978-1-4612-0211-0}
		
		\bibitem{gashenenko2004}
		I.\,N.~Gashenenko and P.\,H.~Richter,
		Enveloping surfaces and admissible velocities of heavy rigid bodies,
		\emph{Int.\ J.\ Bifur.\ Chaos Appl.\ Sci.\ Engrg.} \textbf{14} (2004), no.~8,
		2525--2553.
		\newblock \doi{10.1142/S021812740401103X}
		
		\bibitem{golubitsky1973}
		M.~Golubitsky and V.~Guillemin,
		\emph{Stable Mappings and Their Singularities},
		Graduate Texts in Mathematics, vol.~14, Springer, New York, 1973.
		
		\bibitem{henon}
		M.~H\'enon,
		On the numerical computation of Poincar\'e maps,
		\emph{Physica D} \textbf{5} (1982), no.~2--3, 412--414.
		\newblock \doi{10.1016/0167-2789(82)90034-3}
		
		\bibitem{henon1964}
		M.~H\'enon and C.~Heiles,
		The applicability of the third integral of motion: some numerical experiments,
		\emph{Astronom.\ J.} \textbf{69} (1964), 73--79.
		\newblock \doi{10.1086/109234}
		
		\bibitem{jose1998}
		J.\,V.~Jos\'e and E.\,J.~Saletan,
		\emph{Classical Dynamics: A Contemporary Approach},
		Cambridge University Press, Cambridge, 1998.
		
		\bibitem{lowenstein2012}
		J.\,H.~Lowenstein,
		\emph{Essentials of Hamiltonian Dynamics},
		Cambridge University Press, Cambridge, 2012.
		\newblock \doi{10.1017/CBO9780511793721}
		
		\bibitem{meyer}
		K.\,R.~Meyer, G.\,R.~Hall, and D.~Offin,
		\emph{Introduction to Hamiltonian Dynamical Systems and the $N$-Body Problem},
		2nd ed., Applied Mathematical Sciences, vol.~90, Springer, New York, 2009.
		\newblock \doi{10.1007/978-0-387-09724-4}
		
		\bibitem{milnor1965}
		J.\,W.~Milnor,
		\emph{Topology from the Differentiable Viewpoint},
		University Press of Virginia, Charlottesville, 1965.
		
		\bibitem{schmidt2009}
		S.~Schmidt, H.\,R.~Dullin, and P.\,H.~Richter,
		A Poincar\'e section for the general heavy rigid body,
		\emph{SIAM J.\ Appl.\ Dyn.\ Syst.} \textbf{8} (2009), no.~1, 371--389.
		\newblock \doi{10.1137/080719029}
		
	\end{thebibliography}
\end{document}